\newtheorem{theorem}{Theorem}[section]
\newtheorem{proposition}[theorem]{Proposition}
\newtheorem{corollary}[theorem]{Corollary}
\theoremstyle{definition}
\newtheorem{definition}[theorem]{Definition}
\theoremstyle{remark}
\newtheorem{remark}[theorem]{Remark}
\newtheorem{remarks}[theorem]{Remarks}
\numberwithin{equation}{section}
\DeclareMathOperator{\Spec}{Spec}
\begin{document}

\title{Canonical singularities of dimension three 
in \\ characteristic $2$ which  do not follow \\ Reid's rules}

%    Information for first author
\author{Masayuki Hirokado}
%    Address of record for the research reported here
\address{Graduate school of Information Sciences, Hiroshima City University, Ozuka-higashi, Asaminami-ku, Hiroshima 731-3194, Japan}
%    Current address
%\curraddr{Department of Mathematics and Statistics,
%Case Western Reserve University, Cleveland, Ohio 43403}
\email{hirokado@math.info.hiroshima-cu.ac.jp}
%    \thanks will become a 1st page footnote.
%\thanks{The first author was supported in part by NSF Grant \#000000.}

%    Information for second author
%\author{Author Two}
%\address{Mathematical Research Section, School of Mathematical Sciences,
%Australian National University, Canberra ACT 2601, Australia}
%\email{two@maths.univ.edu.au}
%\thanks{Support information for the second author.}

%    General info
\subjclass[2010]{Primary 14B05, 14G17; Secondary  14J17, 14B07}

%date{June 2, 2015 and, in revised form, ??? ??, 201?.}

%\dedicatory{This paper is dedicated to our advisors.}

\keywords{Canonical singularities, compound Du Val singularities, rational double points, characteristic $p$}

\begin{abstract}
We continue to study and present concrete examples 
in characteristic $2$ of 
compound Du Val singularities 
defined over an algebraically closed field 
which have one dimensional singular loci 
but cannot be written as  
products (a rational double point)$\times$(a curve)
up to analytic isomorphism at any point of the loci.
Unlike in other characteristics,
we find a large number of such examples
whose general hyperplane sections have 
rational double points
of type $D$.
We consider these compound Du Val singularities 
as a special class of canonical singularities, 
intend to complete classification \cite[Theorem~3]{HIS13}
in arbitrary characteristic
reinforcing Miles Reid's result 
in characteristic zero \cite[(1.14)]{Reid80}.
\end{abstract}

\maketitle

\markboth{MASAYUKI HIROKADO}{CANONICAL SINGULARITIES IN CHARACTERISTIC $2$}

%\section*{This is an unnumbered first-level section head}
%This is an example of an unnumbered first-level heading.

%% The correct journal style for \specialsection is all uppercase; a known bug
%% in amsart.cls prevents this, so input must be uppercase until it is fixed.
%\specialsection*{This is a Special Section Head}
%\specialsection*{THIS IS A SPECIAL SECTION HEAD}
%This is an example of a special section head%
%%%%%%%%%%%%%%%%%%%%%%%%%%%%%%%%%%%%%%%%%%%%%%%%%%%%%%%%%%%%%%%%%%%%%%%%
%\footnote{Here is an example of a footnote. Notice that this footnote
%text is running on so that it can stand as an example of how a footnote
%with separate paragraphs should be written.
%\par
%And here is the beginning of the second paragraph.}%
%%%%%%%%%%%%%%%%%%%%%%%%%%%%%%%%%%%%%%%%%%%%%%%%%%%%%%%%%%%%%%%%%%%%%%%%
.

\section{Introduction}
We consider varieties defined over an 
algebraically closed field $k$  
of characteristic $p\geq 0$.
The hypersurface singularity in $\mathbf A_k^4$ with $p=3$ given respectively by 
\begin{eqnarray*}
&&z^2+x^3+y^4+wy^3=0,\quad\\ 
&&z^2+x^3+y^4+x^2y^2+wy^3=0
\end{eqnarray*}
has a one dimensional singular locus $C$, and posesses a crepant resolution.
Simple observation of blow-ups finds 
that these cannot be written as fiber products 
(a rational double point) $\times$ (a curve)
up to analytic isomorphism at any $x\in C$.
These are the examples of canonical singularities
in positive characteristic  
which show that Miles Reid's result in characteristic~$0$
needs some modification when one generalizes it to 
arbitrary characteristic.
  
\begin{theorem}[Reid, \cite{Reid80}]
Let $X$ be a quasi-projective normal variety of dimension three
defined over  the field of complex numbers $\mathbf C$.
If $X$ has at most canonical singularities, then 
there exists some zero dimensional variety $Z\subset X$ such that for any  $x\in X\setminus Z$, 
the complete local ring $\hat{\mathcal O}_{X, x}$ 
is either regular or isomorphic to $\mathbf C[[x,y,z]]/(g(x,y,z))\hat\otimes \mathbf C[[w]]$, 
where $\Spec \mathbf C[[x,y,z]]/(g(x,y,z))$ is a rational double point.
\end{theorem}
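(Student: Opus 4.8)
The plan is to work formally and analytically locally at a general point of the one-dimensional part of the singular locus and reduce the statement to the triviality of equisingular families of rational double points, which is the classical input special to characteristic zero. Since canonical singularities are normal, $\operatorname{Sing}(X)$ has codimension at least two in the threefold $X$, hence dimension at most one; write its one-dimensional part as a curve $\Gamma = \Gamma_1 \cup \cdots \cup \Gamma_r$. All of the pertinent data — smoothness of $\Gamma_i$, the property that $\operatorname{Sing} X$ coincides with $\Gamma_i$ near a point, the analytic isomorphism type of a general transverse slice and its Milnor number — are constructible along $\Gamma_i$, so there is a dense open $U_i \subseteq \Gamma_i$ over which they are all constant. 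Set $Z := \bigl(\operatorname{Sing} X \setminus \bigcup_i U_i\bigr) \cup \{\text{zero-dimensional components of }\operatorname{Sing} X\}$, a zero-dimensional closed subset. For $x \in X \setminus Z$ a smooth point there is nothing to prove, so I fix $x = P \in U_i$ and must identify $\hat{\mathcal O}_{X,P}$.

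Next I would invoke the general hyperplane section theorem for canonical singularities: a general hyperplane $H$ through $P$ cuts out on $X$ a surface with a canonical singularity at $P$ — proved by taking a resolution $\rho\colon Y \to X$ with $K_Y = \rho^*K_X + \sum a_i E_i$, $a_i \ge 0$, using adjunction $K_{H'} = (K_Y + H')|_{H'}$ on the strict transform $H'$ and Bertini to keep $H'$ smooth, which is where characteristic zero enters. A canonical surface singularity is a rational double point $S$, of a Dynkin type $R \in \{A_n, D_n, E_6, E_7, E_8\}$ which, by the choice of $U_i$, is independent of the general point $P$. Since its general hyperplane section is a hypersurface, $X$ is a hypersurface at $P$; choose coordinates realizing $\hat{\mathcal O}_{X,P} = k[[x,y,z,w]]/(f)$ with $\{w=0\}$ such a general section and the $w$-axis equal to $\Gamma$. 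One checks that $w,f$ form a regular sequence, so $k[[w]]\hookrightarrow \hat{\mathcal O}_{X,P}$ is flat with special fibre $S = \Spec k[[x,y,z]]/(f(x,y,z,0))$; passing to an algebraic or analytic representative over a Zariski-open of $\Gamma_i$ and using constancy over $U_i$, every nearby fibre is again a rational double point of type $R$. Thus $X$ near $P$ is an equisingular one-parameter deformation of $S$.

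The heart of the matter is to conclude that such a deformation is analytically trivial. Being a hypersurface singularity, $S$ has a miniversal deformation over a smooth base $\Spec k[[t_1,\dots,t_\mu]]$, $\mu = \mu(R)$, and the family above is induced from it by a classifying homomorphism $\varphi^*\colon k[[t_1,\dots,t_\mu]]\to k[[w]]$ with each $\varphi^*(t_j)\in (w)$. The classical fact — due to Tjurina and Brieskorn, via simultaneous resolution of the rational double points, valid in characteristic zero — is that the locus in $\Spec k[[t_1,\dots,t_\mu]]$ over which the fibre still carries a rational double point of the full type $R$ reduces to the origin. If some $\varphi^*(t_j)$ were nonzero, a general fibre of the family would lie over a point $\ne 0$ of the miniversal base and hence could not be of type $R$, contradicting the previous paragraph; therefore all $\varphi^*(t_j)=0$, $\varphi$ factors through $k\hookrightarrow k[[w]]$, and the deformation is the trivial product. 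Hence $\hat{\mathcal O}_{X,P}\cong \hat{\mathcal O}_{S,0}\,\hat\otimes_k k[[w]] \cong k[[x,y,z]]/(g)\,\hat\otimes_k k[[w]]$ with $\Spec k[[x,y,z]]/(g)$ the rational double point $S$, which is the required form.

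The step I expect to be the main obstacle — and precisely what makes the theorem characteristic-sensitive, hence the subject of the present paper — is the equisingular-triviality input used in the previous paragraph: over a field of characteristic $p$ one cannot in general extract $p$-th roots of units from a power series ring, equisingular one-parameter families of rational double points (conspicuously of types $D$ and $E$ in small characteristic) need not be analytically trivial, and the type-$R$ locus in the miniversal base may well be positive-dimensional; the hypersurfaces produced in this paper realize exactly that failure. A more routine point to handle carefully is the passage from the formal base $\Spec k[[w]]$ to a genuine one-parameter family with honest general fibres, together with the check that a general hyperplane section remains of type $R$ under the extra transversality imposed on $\{w=0\}$.
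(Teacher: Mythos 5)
The paper contains no proof of this statement: it is quoted as background and attributed to Reid \cite{Reid80}, so there is no in-paper argument to compare yours with. Judged on its own terms, your proposal reconstructs the classical characteristic-zero argument along the standard lines: use constructibility along the one-dimensional singular locus to discard a finite set $Z$, show a transverse hyperplane section at a remaining point is a Du Val singularity, deduce that $X$ is a hypersurface there (embedding dimension $4$ plus factoriality of the regular local ring), view $\hat{\mathcal O}_{X,P}$ as a formal one-parameter deformation of that rational double point, and kill the classifying map to the miniversal base using the fact that for a simple (ADE) singularity the stratum of the discriminant carrying the full type $R$ is just the origin (equivalently: no moduli, simultaneous resolution). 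You also correctly isolate the two characteristic-zero inputs (Bertini and the triviality of equisingular ADE families) as exactly what fails in characteristic $p$, which is the phenomenon the present paper documents.

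One step is genuinely too quick as written: the claim that a general hyperplane $H$ \emph{through the fixed point} $P$ is canonical ``by adjunction and Bertini.'' Because $H$ is constrained to pass through $P$, the pulled-back linear system on a resolution $f\colon Y\to X$ has base locus $f^{-1}(P)$, so Bertini does not give smoothness of the strict transform $H'$ along $f^{-1}(P)$, and $f^{*}H=H'+\sum_i m_iE_i$ may acquire exceptional multiplicities $m_i>0$; the adjunction computation then produces discrepancies $a_i-m_i$ rather than $a_i$, and their nonnegativity is not automatic. The standard repair is to take $H$ general in the full base-point-free system: then $f^{*}H=H'$ is smooth and irreducible by characteristic-zero Bertini, all $m_i=0$, so $K_{H'}=f^{*}K_H+\sum_i a_iE_i|_{H'}$ shows $H$ is canonical, i.e.\ Du Val at its finitely many singular points, and these points are general points of the singular curve as $H$ varies; this gives the cDV property at general points of the curve, which is what you need (or one simply cites Reid's (1.14) for this step, at the cost of some circularity with the statement being proved). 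A smaller point you already flag yourself, but which must actually be carried out, is the passage from the formal family over $k[[w]]$ to an honest representative so that ``a nearby fibre would not be of type $R$'' can be compared with the actual transverse slices at nearby points of the curve. With these repairs your argument is the standard one and proves the theorem.
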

 
One way to study such singularities is to examine versal 
deformations of rational double points.
In the preceding work, we proved that these examples are also
exhaustive in $p \geq 3$, i.e. 
there is no such example in $p\geq 5$ 
and in $p=3$ there is no other in the following sense. 

\begin{theorem}[HIS, \cite{HIS13}]
Let $X$ be a three dimensional normal algebraic variety 
over an algebraically closed field $k$ of characteristic $p>0$
with at most canonical singularities.
Then the following assertions hold. 
\begin{enumerate}
\item[{\rm i)}] $X$ is Cohen-Macaulay in codimension two.
\item[{\rm ii)}] Suppose $p>2$ and that a general hyperplane section $H$
of $X$ has at most rational singularities.
Then there exists a zero-dimensional subvariety $Z\subset X$
such that for any $x\in X\setminus Z$, the complete
local ring $\hat{\mathcal O}_{X,x}$ is either regular or
isomorphic to one of the following:
\begin{itemize}
\item $k[[x,y,z]]/(g)\hat{\otimes}k[[w]]$ 
with a rational double point $\Spec k[[x,y,z]]/(g)$,
\item $k[[w, x,y,z]]/(z^2+x^3+y^4+x^2y^2+wy^3)$ with $p=3$,
\item $k[[w, x,y,z]]/(z^2+x^3+y^4+wy^3)$ with $p=3$.
\end{itemize}
\end{enumerate}   
\end{theorem}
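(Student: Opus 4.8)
The plan is to reduce the whole statement to a classification of the complete local rings $\hat{\mathcal O}_{X,x}$ at points $x$ of codimension at most two, and then to carry out that classification using the list of rational double points available in characteristic $p$. Throughout, write $S\subset X$ for the singular locus, a proper closed subset. Part (i) is essentially formal: a normal variety is $R_1$ and $S_2$, and at a point $x$ with $\dim\mathcal O_{X,x}\le 2$ the $S_2$ condition already forces $\operatorname{depth}\mathcal O_{X,x}=\dim\mathcal O_{X,x}$, so $X$ is Cohen--Macaulay in codimension two. The same conclusion can be read off a general hyperplane section, the device that will drive (ii): for a general $H$ one has $\hat{\mathcal O}_{H,x}=\hat{\mathcal O}_{X,x}/(f)$ with $f$ the local equation of $H$, a nonzerodivisor since $\hat{\mathcal O}_{X,x}$ is a domain ($X$ being normal), and killing a nonzerodivisor preserves the Cohen--Macaulay property in both directions.

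For (ii) I would first stratify: after deleting from $X$ a suitable zero-dimensional set $Z$, the locus $S\setminus Z$ is a disjoint union of regular curves, along each component $C$ of which the embedding dimension of $\hat{\mathcal O}_{X,x}$ and the analytic type of a general transverse section are constant, and each of which meets a general hyperplane at finitely many general points. Fixing such a $C$ and a general hyperplane section $H$, the surface $H$ has, at each of its finitely many points of intersection with $C$, a rational singularity by hypothesis; since $X$ is canonical it is Gorenstein in codimension two (a two-dimensional canonical singularity being Du Val), so $H$ is Gorenstein there, and Artin's characterization of the rational double points as the Gorenstein rational surface singularities identifies that singularity with one of the characteristic-$p$ rational double points. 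By semicontinuity this is then the transverse type at every point of $C\setminus Z$.

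The next step promotes this to a description of $\hat{\mathcal O}_{X,x}$ itself, for $x\in C\setminus Z$. With $w$ the local equation of $H$, the ring $\hat{\mathcal O}_{X,x}$ is flat over $k[[w]]$ (again $w$ is a nonzerodivisor) with closed fibre the rational double point $\hat{\mathcal O}_{H,x}=k[[x,y,z]]/(g)$; a flat one-parameter deformation of a hypersurface singularity is again a hypersurface singularity, and the embedding dimension of $\hat{\mathcal O}_{X,x}$ is at most $3+1=4$, so
\[
\hat{\mathcal O}_{X,x}\;\cong\;k[[x,y,z,w]]/\bigl(G(x,y,z,w)\bigr),\qquad G(x,y,z,0)=g(x,y,z),
\]
with $g$ a rational double point equation. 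After a further coordinate change making $C$ the $w$-axis one has $G\in(x,y,z)^2$, and $G$ has multiplicity two since $\hat{\mathcal O}_{X,x}$ is singular; and since $X$ is canonical, $\Spec k[[x,y,z,w]]/(G)$ is a compound Du Val point whose singular locus is one-dimensional and passes through the origin.

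It remains to classify these $G$ up to formal coordinate change, and this is where the characteristic is decisive and where I expect the real work to be. For $p\ge 5$ the rational double points have their classical normal forms and the analysis follows Reid's: any one-parameter deformation $G$ keeping the singular locus one-dimensional and the total space compound Du Val can be moved, by a coordinate change, to a form independent of $w$, giving $\hat{\mathcal O}_{X,x}\cong k[[x,y,z]]/(g)\,\hat\otimes\, k[[w]]$, the finitely many points of $C$ where this fails having been absorbed into $Z$. For $p=3$ one must instead work with Artin's exceptional normal forms for the $D_n$ and $E_n$ singularities --- where $\partial_x(x^3)=0$, cubes cannot be completed, and cube roots of units are not power series --- and a case-by-case examination of the admissible $w$-deformations shows that, besides products, precisely the two germs
\[
k[[w,x,y,z]]/(z^2+x^3+y^4+wy^3),\qquad k[[w,x,y,z]]/(z^2+x^3+y^4+x^2y^2+wy^3)
\]
occur, both arising from the $E_6$ equations, and that these are not analytically products at any point of their singular curves. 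The main obstacle is exactly this last step: for each rational double point type in characteristic $3$ and in characteristic $\ge 5$ one must decide which $w$-deformations preserve one-dimensionality of the singular locus together with the canonical property, and then which of the survivors can be trivialized by a coordinate change --- the Frobenius pathologies in characteristic $3$ being precisely what prevents two of them from being trivializable. A subsidiary nuisance is checking that the set $Z$ --- points where the general hyperplane section fails to be Du Val, where the embedding dimension jumps, where $C$ is singular, or where the transverse type degenerates --- is genuinely zero-dimensional.
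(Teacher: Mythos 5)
This theorem is quoted from \cite{HIS13} as background; the present paper contains no proof of it (its own arguments concern the new characteristic-$2$ equations of Theorem~\ref{main}), so your proposal can only be measured against the strategy the paper attributes to the cited work: analyse one-parameter (versal) deformations of rational double points, with Lipman's classification over non-closed fields handling the hyperplane sections. Your outline follows exactly that strategy, and several of its reductions are sound: assertion i) is indeed formal --- Serre's criterion $R_1+S_2$ already gives Cohen--Macaulayness at points of codimension $\le 2$, with no use of canonicity --- and your presentation of $\hat{\mathcal O}_{X,x}$ as $k[[x,y,z,w]]/(G)$ with $G(x,y,z,0)=g$ an RDP equation is correctly justified (Cohen--Macaulay plus embedding dimension $\le 4$ plus complete normal domain gives a hypersurface).

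The genuine gap is that the step in which the theorem actually lives is asserted rather than proved. For $p\ge 5$ you say any admissible deformation ``can be moved, by a coordinate change, to a form independent of $w$,'' and for $p=3$ that ``a case-by-case examination'' leaves exactly the two $E_6$ germs; but that examination --- running through Artin's characteristic-$p$ normal forms (including the non-taut types $D_n^r$, $E_n^r$), determining which terms $w\,h(x,y,z)$ keep the singular locus one-dimensional and the germ canonical, deciding which of these can be absorbed by formal coordinate changes when derivatives such as $\partial_x(x^3)$ vanish in characteristic $3$, and verifying that the two survivors are canonical with crepant resolution and nowhere analytically products --- is the entire content of ii), and none of it is supplied; as stated, this part of your proposal restates the conclusion. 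A secondary gap is the hyperplane-section mechanism: a general hyperplane section misses any fixed closed point $x\in C$, so to realize $\hat{\mathcal O}_{X,x}$ as flat over $k[[w]]$ with RDP closed fibre you need either a Bertini-type statement for the linear system of hyperplanes through $x$, or an argument at the generic point of $C$ (where the section is a rational double point over the non-closed field $k(C)$ in Lipman's sense --- precisely the device the present paper uses in part v) of Theorem~\ref{main}) followed by spreading out to all but finitely many closed points; ``stratify'' and ``semicontinuity'' do not by themselves give the constancy of the transverse analytic type along $C\setminus Z$ in characteristic $p$. Finally, your sketch should make explicit where $p>2$ enters, since the present paper exists precisely because the same scheme produces the long list of Tables 2--4 of exceptions when $p=2$.
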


\noindent
We call such deviations non-classical compound Du Val 
singularities.

\begin{table}[h]
\caption{Non-classical compound Du Val singularities in $p=3$. }
\label{tab:nonclassicalcdv01}
\begin{center}
{\footnotesize
\begin{tabular}{l|l|l}
\hline
   & \textrm Type & \textrm Defining equation\\
\hline
 $E_6^1$ & $\mathbf{E_6^1G_2}$  & $z^2+x^3+y^4+x^2y^2+wy^3=0$  \\
\cline{2-3}
 $E_6^0$ & $\mathbf{E_6^0G_2}$  & $z^2+x^3+y^4+wy^3=0$  \\
\hline
\end{tabular}
}
\end{center}
\end{table}

\noindent
We want to complete classification.
As a first step, we present in this paper concrete examples  in $p=2$. 

{\bf Theorem \ref{main}} is our main result.
See also tables 2, 3 and 4 where equations are summarized.

\begin{table}[h]
\caption{Non-classical compound Du Val singularities in $p=2$, II. }
\label{tab:nonclassicalcdv02}
\begin{center}
{\footnotesize
\begin{tabular}{l|l|l}
\hline
   & \textrm Type & \textrm Defining equation\\
\hline
 $E_7^3$ & $\mathbf{E_7^3F_4}$  & $z^2+x^3+xy^3+xyz+wy^4=0$  \\
\cline{2-3}
 $E_7^2$ & $\mathbf{E_7^2F_4}$  & $z^2+x^3+xy^3+y^3z+wy^4=0$  \\
\cline{2-3}
 $E_7^1$ & $\mathbf{E_7^1F_4}$  & $z^2+x^3+xy^3+wy^4=0$  \\
\hline
 $E_8^3$ & $\mathbf{E_8^3F_4}$  & $z^2+x^3+y^3z+wy^4=0$  \\
\cline{2-3}
 $E_8^2$ & $\mathbf{E_8^2F_4}$  & $z^2+x^3+xy^2z+wy^4=0$  \\
\cline{2-3}
 $E_8^1$ & $\mathbf{E_8^1F_4}$  & $z^2+x^3+xy^3z+wy^4=0$  \\
\cline{2-3}
 $E_8^0$ & $\mathbf{E_8^0F_4}$  & $z^2+x^3+wy^4=0$  \\
\hline
\end{tabular}
}
\end{center}
\end{table}

As one can see, there are quite a few compared with two equations in $p=3$. 
Among them six were already known \cite[Theorem 2]{HIS13},
which are marked by boxes in the main theorem,
but we believe others are new.  

As for the question of rationality, it might be suggestive that 
if one sticks to traditional notion, 
these examples exhibit no peculiarity.

\textbf{Corollary~\ref{vanishing}}
{\it Let $X$ be a hypersurface singularity given by one of the equations 
in Theorem~\ref{main}.
Then the following assertions hold.
\begin{itemize}
\item [i)] $R^i\pi_*\mathcal O_{\tilde X}=0$ $(i>0)$ holds for any resolution of singularities
$\pi:\tilde X\to X$. 
\item [ii)] $R^i\pi_*K_{\tilde X}=0$ $(i>0)$ holds for any resolution of singularities
$\pi:\tilde X\to X$. 
\end{itemize}
}
%\end{corollary} 

\noindent
However, if one asks about $F$-rationality 
(cf. \cite[\S 10]{BrunsHerzog98}), 
these examples turn out to be highly irrational.

\textbf{Proposition~\ref{f-rationality}}
{\it Let $X$ be a hypersurface singularity given by one of the equations 
in Theorem~\ref{main}.
Then the following assertions hold.
\begin{itemize}
\item [i)] $X$ is $F$-pure if and only if 
the type is one of 
{\footnotesize
$\mathbf{E_7^3F_4}$, $\mathbf{D_4^1B_1}$, 
$\mathbf{D_4^1B_2}$, $\mathbf{D_5^1C_3}$, $\mathbf{D_{2n}^{n-1}B_{n-1}}$,
$\mathbf{D_{2n}^{n-1}B_{n}}$, $\mathbf{D_{2n}^{n-1}C_{2n-2}}$,
$\mathbf{D_{2n+1}^{n-1}B_{n-1}}$,   
$\mathbf{D_{2n+1}^{n-1}C_{2n-1}}$ 
}
with $n\geq 3$.
\item [ii)] $X$ is not $F$-rational.
\end{itemize}
}
%\end{proposition}

\section{Preliminaries}

Standard notation and terminologies found for example
in \cite{Hartshorne77}, \cite{Matsumura86}
should be used without mentioning in this paper.   
%\suppressfloats
\begin{table}[h]
\caption{Non-classical compound Du Val singularities in $p=2$, I. }
\label{tab:nonclassicalcdv03}
\begin{center}
{\scriptsize
\begin{tabular}{l|l|l}
\hline
   & \textrm Type & \textrm Defining equation\\
\hline
 $D_4^1$ & $\mathbf{D_4^1B_1}$  & $z^2+xyz+wx^2+y^3=0$  \\
  & $\mathbf{D_4^1B_2}$  & $z^2+xy^2+xyz+wx^2=0$  \\
\cline{2-3}
 $D_4^0$ & $\mathbf{D_4^0B_1}$ &  $z^2+wx^2+y^3=0$  \\
       & $\mathbf{D_4^0B_2}$ &  $z^2+xy^2+wx^2=0$  \\
\hline
 $D_5^1$ & $\mathbf{D_5^1C_3}$ &  $z^2+x^2y+y^2z+xyz+wy^3=0$\\
\cline{2-3}
 $D_5^0$ & $\mathbf{D_5^0C_3}$ &  $z^2+x^2y+y^2z+wy^3=0$\\
\hline
 $D_6^2$ & $\mathbf{D_6^2B_2}$  &  $z^2+xy^3+xyz+wx^2+y^5=0$ \\
         & $\mathbf{D_6^2B_3}$  &  $z^2+xy^3+xyz+wx^2=0$ \\
         & $\mathbf{D_6^2C_4}$  &  $z^2+x^2y+xy^3+xyz+wy^4=0$ \\
\cline{2-3}
 $D_6^1$ & $\mathbf{D_6^1B_2}$  &  $z^2+xy^3+xy^2z+wx^2+y^5=0$ \\
         & $\mathbf{D_6^1B_3}$  &  $z^2+xy^3+xy^2z+wx^2=0$ \\
         & $\mathbf{D_6^1C_3}$  &  $z^2+x^2y+xy^3+xy^2z+wy^3=0$ \\
         & $\mathbf{D_6^1C_4}$  &  $z^2+x^2y+xy^3+xy^2z+wy^4=0$ \\
\cline{2-3}
$D_6^0$ & $\mathbf{D_6^0C_3}$  &  $z^2+x^2y+xy^3+wy^3=0$\\
        & $\mathbf{D_6^0C_4}$  &  $z^2+x^2y+xy^3+wy^4=0$\\
\hline
 $D_7^2$ & $\mathbf{D_7^2B_2}$ &  $z^2+xyz+wx^2+y^5=0$\\
         & $\mathbf{D_7^2C_5}$ &  $z^2+x^2y+xyz+y^3z+wy^5=0$\\
\cline{2-3}
 $D_7^1$ & $\mathbf{D_7^1C_4}$ &  $z^2+x^2y+xy^2z+y^3z+wy^4=0$\\
         & $\mathbf{D_7^1C_5}$ &  $z^2+x^2y+xy^2z+y^3z+wy^5=0$\\
\cline{2-3}
 $D_7^0$ & $\mathbf{D_7^0C_4}$ &  $z^2+x^2y+y^3z+wy^4=0$\\
  & $\mathbf{D_7^0C_5}$ &  $z^2+x^2y+y^3z+wy^5=0$\\
\hline
 $D_8^3$ & $\mathbf{D_8^3B_3}$ &  $z^2+xy^4+xyz+wx^2+y^7=0$\\
         & $\mathbf{D_8^3B_4}$ &  $z^2+xy^4+xyz+wx^2=0$\\
         & $\mathbf{D_8^3C_6}$ &  $z^2+x^2y+xy^4+xyz+wy^6=0$\\
\cline{2-3}
 $D_8^2$ & $\mathbf{D_8^2B_2}$ &  $z^2+xy^4+xy^2z+wx^2+y^5=0$\\
         & $\mathbf{D_8^2B_3}$ &  $z^2+xy^4+xy^2z+wx^2+y^7=0$\\
         & $\mathbf{D_8^2B_4}$ &  $z^2+xy^4+xy^2z+wx^2=0$\\
         & $\mathbf{D_8^2C_4}$ &  $z^2+x^2y+xy^4+xy^2z+wy^4=0$\\
         & $\mathbf{D_8^2C_5}$ &  $z^2+x^2y+xy^4+xy^2z+wy^5=0$\\
         & $\mathbf{D_8^2C_6}$ &  $z^2+x^2y+xy^4+xy^2z+wy^6=0$\\
\cline{2-3}
 $D_8^1$ & $\mathbf{D_8^1B_2}$ &  $z^2+xy^4+xy^3z+wx^2+y^5=0$\\
         & $\mathbf{D_8^1B_3}$ &  $z^2+xy^4+xy^3z+wx^2+y^7=0$\\
         & $\mathbf{D_8^1B_4}$ &  $z^2+xy^4+xy^3z+wx^2=0$\\
         & $\mathbf{D_8^1C_4}$ &  $z^2+x^2y+xy^4+xy^3z+wy^4=0$\\
         & $\mathbf{D_8^1C_5}$ &  $z^2+x^2y+xy^4+xy^3z+wy^5=0$\\
         & $\mathbf{D_8^1C_6}$ &  $z^2+x^2y+xy^4+xy^3z+wy^6=0$\\
\cline{2-3}
 $D_8^0$ & $\mathbf{D_8^0B_2}$ &  $z^2+xy^4+wx^2+y^5=0$\\
         & $\mathbf{D_8^0B_3}$ &  $z^2+xy^4+wx^2+y^7=0$\\
         & $\mathbf{D_8^0B_4}$ &  $z^2+xy^4+wx^2=0$\\
         & $\mathbf{D_8^0C_4}$ &  $z^2+x^2y+xy^4+wy^4=0$\\
         & $\mathbf{D_8^0C_5}$ &  $z^2+x^2y+xy^4+wy^5=0$\\
         & $\mathbf{D_8^0C_6}$ &  $z^2+x^2y+xy^4+wy^6=0$\\
 \hline
$D_9^3$ & $\mathbf{D_9^3B_3}$ &  $z^2+xyz+wx^2+y^7=0$\\
        & $\mathbf{D_9^3C_7}$ &  $z^2+x^2y+xyz+y^4z+wy^7=0$\\
\cline{2-3}
$D_9^2$ & $\mathbf{D_9^2C_5}$ &  $z^2+x^2y+xy^2z+y^4z+wy^5=0$\\
        & $\mathbf{D_9^2C_6}$ &  $z^2+x^2y+xy^2z+y^4z+wy^6=0$\\
        & $\mathbf{D_9^2C_7}$ &  $z^2+x^2y+xy^2z+y^4z+wy^7=0$\\
\cline{2-3}
$D_9^1$ & $\mathbf{D_9^1C_5}$ &  $z^2+x^2y+xy^3z+y^4z+wy^5=0$\\
        & $\mathbf{D_9^1C_6}$ &  $z^2+x^2y+xy^3z+y^4z+wy^6=0$\\
        & $\mathbf{D_9^1C_7}$ &  $z^2+x^2y+xy^3z+y^4z+wy^7=0$\\
\cline{2-3}
$D_9^0$ & $\mathbf{D_9^0C_5}$ &  $z^2+x^2y+y^4z+wy^5=0$\\
        & $\mathbf{D_9^0C_6}$ &  $z^2+x^2y+y^4z+wy^6=0$\\
        & $\mathbf{D_9^0C_7}$ &  $z^2+x^2y+y^4z+wy^7=0$\\
\hline
 $\vdots$ & $\vdots$  & $\vdots$  \\
\end{tabular}
}
\end{center}
\end{table}

We distinguish rational surface singularities in the sense of Artin
\cite[p.~129]{Artin66} 
from those in the sense of Lipman \cite[Definition~(1.1)]{Lipman69}.
The former is the notion for surfaces defined over an algebraically closed field,
whereas the latter is the notion for excellent surfaces.

Recent works of Cossart and Piltant \cite{CossartPiltant08},
\cite{CossartPiltant09}, Cutkosky \cite{Cutkosky09}
on resolution of singularities of three dimensional 
varieties in arbitrary characteristic 
allow us to define canonical and terminal singularities in the same way as in
characteristic zero \cite{Reid80}. 

\begin{definition}[\cite{HIS13}]
Let $X$ be a quasi-projective normal variety of dimension two or three 
defined over an algebraically 
closed field $k$ of characteristic $p$.
Then $X$ is said to have only canonical (resp. terminal) singularities, 
if the following conditions are satisfied.
\begin{enumerate}
\item [i)]  $X$ is $\mathbf Q$-Gorenstein, i.e. there exists a positive
integer $m$ such that $mK_X$ is a Cartier divisor. 
\item [ii)]  There exists a resolution of singularities 
$\pi :\tilde X\to X$, such that $mK_{\tilde X}\sim\pi^*(mK_X)+
\sum_ia_iE_i$ with $a_i\geq 0$ (resp. $a_i> 0$), where $E:=\cup_iE_i$ is the irreducible 
decomposition of the exceptional divisor of $\pi$.    
\end{enumerate}
\end{definition}
    
As in characteristic zero, the definition of canonical (resp. terminal) singularities 
are independent of the choice of resolutions $\pi :\tilde X\to X$.
\begin{proposition}[HIS,~{\cite[Propositions 11, 12, Lemma 13]{HIS13}}]
Let $X$ be a normal variety of dimension two or three over an 
algebraically closed field $k$ of characteristic $p\geq 0$,
and $\pi : \tilde X\to X$ be a resolution, i.e. 
a proper birational morphism from a nonsingular $\tilde X$. 
Then for any $i>0$, the higher direct image sheaves
$R^i\pi_*\mathcal O_{\tilde X}$ and  
$R^i\pi_*K_{\tilde X}$ are independent of 
the choice of resolutions
$\pi : \tilde X\to X$.
In particular, if $X$ is nonsingular,  
these sheaves are zero.
\end{proposition}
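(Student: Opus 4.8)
The statement is that for a resolution $\pi\colon\tilde X\to X$ of a normal surface or threefold, the sheaves $R^i\pi_*\mathcal O_{\tilde X}$ and $R^i\pi_*K_{\tilde X}$ ($i>0$) do not depend on the choice of $\tilde X$. The plan is to reduce to comparing two resolutions that are related by a single proper birational morphism and then to invoke a Leray spectral sequence argument. Concretely, given two resolutions $\pi_1\colon\tilde X_1\to X$ and $\pi_2\colon\tilde X_2\to X$, I would first dominate them both by a common resolution: take a resolution $\tilde X_3$ of the closure of the graph of the birational map $\tilde X_1\dashrightarrow\tilde X_2$ inside $\tilde X_1\times_X\tilde X_2$, using the cited resolution results of Cossart--Piltant and Cutkosky in dimension $\le 3$ and arbitrary characteristic. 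Then it suffices to prove the invariance when $\pi_2=\pi_1\circ f$ for a proper birational morphism $f\colon\tilde X_3\to\tilde X_1$ between nonsingular varieties.

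\textbf{Key steps.} For the structure sheaf: since $f$ is a proper birational morphism of nonsingular (hence normal, and in fact here rational-singularity-free) varieties, one has $f_*\mathcal O_{\tilde X_3}=\mathcal O_{\tilde X_1}$ and, by the last sentence of the proposition applied to $f$ itself (the nonsingular case, which must be established first), $R^j f_*\mathcal O_{\tilde X_3}=0$ for $j>0$. The Leray spectral sequence $R^i\pi_{1*}(R^j f_*\mathcal O_{\tilde X_3})\Rightarrow R^{i+j}(\pi_1\circ f)_*\mathcal O_{\tilde X_3}$ then degenerates to give $R^i\pi_{2*}\mathcal O_{\tilde X_3}\cong R^i\pi_{1*}\mathcal O_{\tilde X_1}$. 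For the canonical sheaf the same scheme applies once one knows $f_*K_{\tilde X_3}=K_{\tilde X_1}$ and $R^j f_*K_{\tilde X_3}=0$ for $j>0$; the first is the standard fact that $f_*\omega$ is computed on the nonsingular locus and $f$ is an isomorphism in codimension $1$ away from the exceptional locus, and the second is again the nonsingular case of the vanishing. So the whole proposition hinges on the bootstrap claim: \emph{if $f\colon W'\to W$ is a proper birational morphism with $W$, $W'$ nonsingular of dimension $\le 3$, then $R^j f_*\mathcal O_{W'}=0$ and $R^j f_*\omega_{W'}=0$ for $j>0$, and $f_*\mathcal O_{W'}=\mathcal O_W$, $f_*\omega_{W'}=\omega_W$.}

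\textbf{Main obstacle.} The crux is precisely this nonsingular vanishing, i.e. that smooth varieties of dimension $\le 3$ have rational singularities together with the dual statement for $\omega$. In dimension $2$ this is classical (Lipman, or factorization of birational maps of smooth surfaces into blow-ups at closed points, where the computation is explicit). In dimension $3$ one cannot factor so cheaply, so I would argue as follows: after base-changing to the completed local rings, $f$ is proper birational and an isomorphism over the complement of a closed set of codimension $\ge 2$; write $f$ as a composite of a normalized blow-up along an ideal and further resolution, or directly use that any such $f$ between smooth $3$-folds is dominated by a composition of blow-ups along smooth centers (here the char-$p$ resolution machinery of Cossart--Piltant/Cutkosky supplies the needed strong factorization, or at least embedded resolution of the relevant ideal sheaf), and for a blow-up along a smooth center in a smooth variety the vanishing $R^j f_*\mathcal O=0$ and $R^j f_*\omega=0$ are computed directly from the projective-bundle structure of the exceptional divisor via the formula for $\omega$ of a blow-up. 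The $\omega$ case additionally uses Grothendieck--Serre duality: $R^j f_*\omega_{W'}$ is dual to the local cohomology/$\mathrm{Ext}$ groups that vanish because $R^{-j}f^!\mathcal O_W$ sits in the right degrees, or more concretely one invokes the Grauert--Riemenschneider-type vanishing which, for the specific composites of smooth blow-ups at hand, is again a direct cohomology-of-projective-bundle computation rather than an appeal to characteristic-zero vanishing theorems. I would present the smooth-blow-up computation in detail as the technical heart, then assemble the general case by the spectral-sequence/factorization argument sketched above. The part most likely to need care is ensuring the char-$p$ resolution results genuinely give a \emph{factorization} of $f$ (not merely a resolution of $X$), so if strong factorization is unavailable one should instead resolve the graph and run the spectral sequence twice, once for each projection, which still only requires the smooth-source vanishing and so closes the loop.
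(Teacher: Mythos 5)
Your global reduction is the right skeleton: dominate two resolutions by a third (using the cited char-$p$ resolution results for threefolds), note $f_*\mathcal O=\mathcal O$ by normality and Zariski's main theorem, and run the Leray spectral sequence, so that everything hinges on the vanishing $R^jf_*\mathcal O_{W'}=0$ and $R^jf_*\omega_{W'}=0$ for a proper birational morphism $f\colon W'\to W$ of nonsingular varieties of dimension $\le 3$. This is also how the statement is reduced in \cite{HIS13}, to which the present paper delegates the proof. The genuine gap is in your treatment of that key vanishing in dimension $3$. The results of Cossart--Piltant \cite{CossartPiltant08,CossartPiltant09} and Cutkosky \cite{Cutkosky09} give resolution of singularities of threefolds; they do not give a factorization (nor even a weak factorization) of an arbitrary proper birational morphism between smooth threefolds into blow-ups along smooth centers. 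Strong factorization is open in every characteristic and weak factorization is known only in characteristic zero, so the ``technical heart'' you propose (explicit projective-bundle computation for smooth-center blow-ups, assembled by factorization) cannot be built from the cited machinery. Moreover, even if you dominate $f$ by some $g=f\circ h$ for which $Rg_*\mathcal O=\mathcal O_W$ is known (say $g$ a composite of permissible blow-ups obtained by principalizing the ideal that $f$ blows up), extracting $R^jf_*\mathcal O_{W'}=0$ from Leray requires $R^jh_*\mathcal O_{W''}=0$ for the auxiliary morphism $h\colon W''\to W'$, which is a statement of exactly the same type; your fallback of ``resolving the graph and running the spectral sequence twice'' is circular for the same reason, since each projection from the graph resolution is again a proper birational morphism with smooth source whose higher direct images are what you are trying to control.

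What fills this gap in the literature (and in the proof cited from \cite{HIS13}) is the theorem of Chatzistamatiou and R\"ulling on higher direct images of the structure sheaf under proper birational morphisms of smooth varieties in positive characteristic, proved by entirely different means (an action of Chow groups with supports on Hodge cohomology), together with Grothendieck duality to pass from $\mathcal O_{W'}$ to $\omega_{W'}$ for smooth (hence Gorenstein) sources. Your two-dimensional case is fine, since birational morphisms of smooth surfaces factor into point blow-ups in any characteristic, and your remark that the $K_{\tilde X}$ statement follows from the $\mathcal O_{\tilde X}$ statement by duality is sound; but as written the three-dimensional bootstrap claim is unproved, so the proposal does not yet constitute a proof.
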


\section{Concrete equations}

In this section we present equations of 
compound Du Val singularities
in characteristic $2$.

\begin{theorem}\label{main}
Let $k$ be an algebraically closed field of characteristic $2$,
$X$ be a hypersurface singularity
defined by one of the following polynomials $f(x,y,z,w)$
as $X:=\Spec k[x,y,z,w]/(f)$.
Then the following assertions hold.
\begin{itemize}
\item [i)] $X$ has a one dimensional singular locus $C$.  
\item [ii)] At any points $x_1, x_2\in C$, the complete local rings
$\hat{\mathcal O}_{X, x_1}$ and $\hat{\mathcal O}_{X, x_2}$ are
isomorphic to each other.  
\item [iii)] $X$ has a crepant resolution which is obtained by 
a succession of blow-ups along singular loci.
More detailed description is given after each equation below.   
\item [iv)] At any point $x\in C$, the complete local ring
$\hat{\mathcal O}_{X, x}$ cannot be expressed as
$k[[x,y,z]]/(g)\hat{\otimes}k[[w]]$ 
with any rational double point $\Spec k[[x,y,z]]/(g)$.
\item [v)] A general hyperplane section $H\subset X$ 
has a rational double point of the type indicated on the left of 
each equation. 
For example, $\mathbf{D_4^1B_1}$ stands for
that $H$ has a rational double point of type $\mathbf{D_4^1}$
in Artin's sense, and $\mathbf{B_1}$ is the type of the rational 
double point in Lipman's sense which appears on $H$ when one views
it as a two dimensional scheme defined over the function field. 
\end{itemize}

{\small

\vspace{2mm}

{\scriptsize $D_4^1:$} 

\noindent
$\mathbf{D_4^1B_1}:$ $z^2+xyz+wx^2+y^3=0$. This can be resolved by a single blow-up.
The exceptional divisor is irreducible and singular.

\noindent
$\mathbf{D_4^1B_2}:$ $z^2+xy^2+xyz+wx^2=0$. This can be resolved by two blow-ups. 
A trivial product of a rational double point of type $A_1$ with a curve appears
after the first blow-up.

\vspace{2mm}

{\scriptsize $D_4^0:$} 

\noindent
\fbox{$\mathbf{D_4^0B_1}:$ $z^2+wx^2+y^3=0$.}
This can be resolved by a single blow-up.
The exceptional divisor is irreducible and singular.

\noindent
\fbox{$\mathbf{D_4^0B_2}:$ $z^2+xy^2+wx^2=0$.} This can be resolved by two blow-ups. After the first blow-up, 
a trivial product of a rational double point of type $A_1$ with a curve appears.  

\vspace{2mm}

{\scriptsize $D_5^1:$} 

\noindent
$\mathbf{D_5^1C_3}:$ $z^2+x^2y+y^2z+xyz+wy^3=0$. This can be resolved by three blow-ups.
After the first blow-up, two disjoint trivial products of
$A_1$ with curves appear.

\vspace{2mm}

{\scriptsize $D_5^0:$} 

\noindent
\fbox{$\mathbf{D_5^0C_3}:$ $z^2+x^2y+y^2z+wy^3=0$.} 
This can be resolved by three blow-ups.
After the first blow-up, two disjoint trivial products of
$A_1$ with curves appear.

\vspace{2mm}

{\scriptsize $D_6^2:$} 
 
\noindent
$\mathbf{D_6^2B_2}:$ $z^2+xy^3+xyz+wx^2+y^5=0$.
This can be resolved by two blow-ups.
After the first blow-up, the singularity $\mathbf {D_4^1B_1}$ 
appears.

\noindent
$\mathbf{D_6^2B_3}:$ $z^2+xy^3+xyz+wx^2=0$.
This can be resolved by three blow-ups.
After the first blow-up, the singularity of type $\mathbf {D_4^1B_2}$ appears.

\noindent
$\mathbf{D_6^2C_4}:$ $z^2+x^2y+xy^3+xyz+wy^4=0$.
This can be resolved by four blow-ups.
After the first blow-up, the singularity of type $\mathbf {D_4^1B_2}$ and the
singularity isomorphic to a trivial product of $A_1$ with a curve appear.

\vspace{2mm}

{\scriptsize $D_6^1:$} 

\noindent
$\mathbf{D_6^1B_2}:$ $ z^2+xy^3+wx^2+y^5=0$.
This can be resolved by two blow-ups.
After the first blow-up, 
the singularity of type $\mathbf{D_4^0B_1}$ appears.

\noindent
\fbox{$\mathbf{D_6^1B_3}:$ $z^2+xy^3+wx^2=0$.} 
This can be resolved by three blow-ups.
After the first blow-up, the singularity of type $\mathbf{D_4^0B_2}$ appears.

\noindent
$\mathbf{D_6^1C_3}:$ $z^2+x^2y+xy^3+xy^2z+wy^3=0$.
This can be resolved by three blow-ups.
After the first blow-up, two disjoint trivial products of
$A_1$ with curves appear.

\noindent
$\mathbf{D_6^1C_4}:$ $z^2+x^2y+xy^3+xy^2z+wy^4=0$.
This can be resolved by three blow-ups.
After the first blow-up, a trivial product of $A_1$ with a curve  
and the singularity of type $\mathbf{D_4^0B_2}$ appear.

\vspace{2mm}

{\scriptsize $D_6^0:$} 

\noindent
$\mathbf{D_6^0C_3}:$ $z^2+x^2y+xy^3+wy^3=0$.
This can be resolved by three blow-ups.
After the first blow-up, two disjoint  trivial products of
$A_1$ with curves appear.

\noindent
$\mathbf{D_6^0C_4}:$ $z^2+x^2y+xy^3+wy^4=0$.
This can be resolved by four blow-ups.
After the first blow-up, the singularities which are isomorphic to a 
trivial product of $A_1$ with a curve and the one of type $\mathbf{D_4^0B_2}$ appear.

\vspace{2mm}

{\scriptsize $D_7^2:$} 

\noindent
$\mathbf{D_7^2B_2}:$ $z^2+xyz+wx^2+y^5=0$.
This can be resolved by two blow-ups.
After the first blow-up, the singularity of type $\mathbf {D_4^1B_1}$ appears.

\noindent
$\mathbf{D_7^2C_5}:$ $z^2+x^2y+y^3z+xyz+wy^5=0$.
This can be resolved by five blow-ups.
After the first blow-up, the singularities which are isomorphic to a 
trivial product of $A_1$ with a curve and the one of type $\mathbf{D_5^1C_3}$ appear.

\vspace{2mm}

{\scriptsize $D_7^1:$} 

\noindent
$\mathbf{D_7^1C_4}:$ $z^2+x^2y+y^3z+xy^2z+wy^4=0$.
This can be resolved by four blow-ups.
After the first blow-up, the singularities which are isomorphic to a 
trivial product of $A_1$ with a curve and the one of type $\mathbf{D_4^0B_2}$ appear.

\noindent
$\mathbf{D_7^1C_5}:$ $z^2+x^2y+y^3z+xy^2z+wy^5=0$.
This can be resolved by five blow-ups.
After the first blow-up, the singularities which are isomorphic to a 
trivial product of $A_1$ with a curve and the one of type $\mathbf{D_5^0C_3}$ appear.

\vspace{2mm}

{\scriptsize $D_7^0:$} 

\noindent
$\mathbf{D_7^0C_4}:$ $z^2+x^2y+y^3z+wy^4=0$.
This can be resolved by four blow-ups.
After the first blow-up, the singularities which are isomorphic to a 
trivial product of $A_1$ with a curve and the one of type $\mathbf{D_4^0B_2}$ appear.

\noindent
$\mathbf{D_7^0C_5}:$ $z^2+x^2y+y^3z+wy^5=0$.
This can be resolved by five blow-ups.
After the first blow-up, a trivial product of $A_1$ with a curve and 
the singularity of type $\mathbf{D_5^0C_3}$ appear.
 
\vspace{2mm}

{\scriptsize $D_8^3:$} 

\noindent
$\mathbf{D_8^3B_3}:$ $ z^2+xy^4+xyz+wx^2+y^7=0$.
This can be resolved by three blow-ups.
After the first blow-up, 
the singularity of type $\mathbf{D_6^2B_2}$ appears.

\noindent
$\mathbf{D_8^3B_4}:$ $z^2+xy^4+xyz+wx^2=0$.
This can be resolved by four blow-ups.
After the first blow-up, the singularity of type $\mathbf{D_6^2B_3}$ appears.

\noindent
$\mathbf{D_8^3C_6}:$ $z^2+x^2y+xy^4+xyz+wy^6=0$.
This can be resolved by six blow-ups.
After the first blow-up, a trivial product of $A_1$ with a curve and 
the singularity of type $\mathbf{D_6^2C_4}$ appear.
 
\vspace{2mm}

{\scriptsize $D_8^2:$} 

\noindent
$\mathbf{D_8^2B_2}:$ $z^2+xy^4+xy^2z+wx^2+y^5=0$.
This can be resolved by two blow-ups.
%\underline{$\mathbf{D_8^2B_2}:$ $z^2+y^5+xy^2z+wx^2=0$ can be resolved by two blow-ups.}
After the first blow-up, the singularity of type $\mathbf{D_4^0B_1}$ appears.

\noindent
$\mathbf{D_8^2B_3}:$ $ z^2+xy^4+xy^2z+wx^2+y^7=0$.
This can be resolved by three blow-ups.
After the first blow-up, the singularity of type $\mathbf{D_6^1B_2}$ appears.

\noindent
$\mathbf{D_8^2B_4}:$ $ z^2+xy^4+xy^2z+wx^2=0$.
This can be resolved by four blow-ups.
After the first blow-up, 
the singularity of type $\mathbf{D_6^1B_3}$ appears.

%\noindent
%$\mathbf{D_8^2003c}:$ $ z^2+xy^4+xy^2z+y^7+wx^2=0$ can be resolved by four blow-ups.
%After the first blow-up, 
%\underline{the singularity of type $\mathbf{E_7^2002b}$} appear.

\noindent
$\mathbf{D_8^2C_4}:$ $ z^2+x^2y+xy^4+xy^2z+wy^4=0$.
This can be resolved by four blow-ups.
After the first blow-up, a trivial product of $A_1$ with a curve and  
the singularity of type $\mathbf{D_4^0B_2}$ appear.

\noindent
$\mathbf{D_8^2C_5}:$ $ z^2+x^2y+xy^4+xy^2z+wy^5=0$.
This can be resolved by five blow-ups.
After the first blow-up,  a trivial product of $A_1$ with a curve and
the singularity of type $\mathbf{D_6^1C_3}$ appear.

\noindent
$\mathbf{D_8^2C_6}:$ $ z^2+x^2y+xy^4+xy^2z+wy^6=0$.
This can be resolved by six blow-ups.
After the first blow-up,  a trivial product of $A_1$ with a curve and
the singularity of type $\mathbf{D_6^1C_4}$ appear.

\vspace{2mm}

{\scriptsize $D_8^1:$} 

\noindent
$\mathbf{D_8^1B_2}:$ $z^2+xy^4+xy^3z+wx^2+y^5=0$.
This can be resolved by two blow-ups.
After the first blow-up, the singularity of type $\mathbf{D_4^0B_1}$ appears.

\noindent
$\mathbf{D_8^1B_3}:$ $z^2+xy^4+xy^3z+wx^2+y^7=0$.
This can be resolved by three blow-ups,
After the first blow-up, the singularity of type $\mathbf{D_6^1B_2}$ appears.

\noindent
$\mathbf{D_8^1B_4}:$ $z^2+xy^4+xy^3z+wx^2=0$.
This can be resolved by four blow-ups.
After the first blow-up, 
the singularity of type $\mathbf{D_6^1B_3}$ appears.

\noindent
$\mathbf{D_8^1C_4}:$ $z^2+x^2y+xy^4+xy^3z+wy^4=0$.
This can be resolved by four blow-ups.
After the first blow-up, a trivial product of $A_1$ with a curve and
the singularity of type $\mathbf{D_4^0B_2}$ appear.

\noindent
$\mathbf{D_8^1C_5}:$ $z^2+x^2y+xy^4+xy^3z+wy^5=0$.
This can be resolved by five blow-ups.
After the first blow-up, a trivial product of $A_1$ with a curve and
the singularity of type $\mathbf{D_6^0C_3}$ appear.

\noindent
$\mathbf{D_8^1C_6}:$ $z^2+x^2y+xy^4+xy^3z+wy^6=0$.
This can be resolved by six blow-ups.
After the first blow-up, a trivial product of $A_1$ with a curve and
the singularity of type $\mathbf{D_6^0C_4}$ appear.

\vspace{2mm}

{\scriptsize $D_8^0:$} 

\noindent
$\mathbf{D_8^0B_2}:$ $z^2+xy^4+wx^2+y^5=0$.
This can be resolved by two blow-ups.
After the first blow-up, the singularity of type $\mathbf{D_4^0B_1}$ appears.

\noindent
$\mathbf{D_8^0B_3}:$ $z^2+xy^4+wx^2+y^7=0$.
This can be resolved by three blow-ups.
Afer the first blow-up, the singularity of type $\mathbf{D_6^1B_2}$ appears.

\noindent
$\mathbf{D_8^0B_4}:$ $z^2+xy^4+wx^2=0$.
This can be resolved by four blow-ups.
After the first blow-up, the singularity of type $\mathbf{D_6^1B_3}$ appears.

\noindent
\fbox{$\mathbf{D_8^0C_4}:$ $z^2+x^2y+xy^4+wy^4=0$.} 
This can be resolved by four blow-ups.
After the first blow-up, a trivial product of $A_1$ with a curve and
the singularity of type $\mathbf{D_4^0B_2}$ appear.

\noindent
$\mathbf{D_8^0C_5}:$ $z^2+x^2y+xy^4+wy^5=0$.
This can be resolved by two blow-ups.
After the first blow-up, a trivial product of $A_1$ with a curve and
the singularity of type $\mathbf{D_6^0C_3}$ appear.

\noindent
$\mathbf{D_8^0C_6}:$ $z^2+x^2y+xy^4+wy^6=0$.
This can be resolved by two blow-ups.
After the first blow-up, a trivial product of $A_1$ with a curve and
the singularity of type $\mathbf{D_6^0C_4}$ appear.

\vspace{2mm}

{\scriptsize $D_9^3:$} 

\noindent
$\mathbf{D_9^3B_3}:$ $z^2+xyz+wx^2+y^7=0$.
This can be resolved by three blow-ups.
After the first blow-up, the singularity of type $\mathbf{D_7^2B_2}$ appears.

\noindent
$\mathbf{D_9^3C_7}:$ $z^2+x^2y+y^4z+xyz+wy^7=0$.
This can be resolved by seven blow-ups.
After the first blow-up, a trivial product of $A_1$ with a curve and
the singularity of type $\mathbf{D_7^2C_5}$ appear.

\vspace{2mm}
 
{\scriptsize $D_9^2:$} 

\noindent
$\mathbf{D_9^2C_5}:$ $z^2+x^2y+y^4z+xy^2z+wy^5=0$.
This can be resolved by five blow-ups.
After the first blow-up, a trivial product of $A_1$ with a curve and
the singularity of type $\mathbf{D_6^1C_3}$ appear.

\noindent
$\mathbf{D_9^2C_6}:$ $z^2+x^2y+y^4z+xy^2z+wy^6=0$.
This can be resolved by six blow-ups.
After the first blow-up, a trivial product of $A_1$ with a curve and
the singularity of type $\mathbf{D_7^1C_4}$ appear.

\noindent
$\mathbf{D_9^2C_7}:$ $z^2+x^2y+y^4z+xy^2z+wy^7=0$.
This can be resolved by seven blow-ups.
After the first blow-up, a trivial product of $A_1$ with a curve and
the singularity of type $\mathbf{D_7^1C_5}$ appear.

\vspace{2mm}

{\scriptsize $D_9^1:$} 

\noindent
$\mathbf{D_9^1C_5}:$ $z^2+x^2y+y^4z+xy^3z+wy^5=0$.
This can be resolved by five blow-ups.
After the first blow-up, a trivial product of $A_1$ with a curve and
the singularity of type $\mathbf{D_6^0C_3}$ appear.

\noindent
$\mathbf{D_9^1C_6}:$ $z^2+x^2y+y^4z+xy^3z+wy^6=0$.
This can be resolved by six blow-ups.
After the first blow-up, a trivial product of $A_1$ with a curve and
the singularity of type $\mathbf{D_7^0C_4}$ appear.

\noindent
$\mathbf{D_9^1C_7}:$ $z^2+x^2y+y^4z+xy^3z+wy^7=0$.
This can be resolved by seven blow-ups.
After the first blow-up, a trivial product of $A_1$ with a curve and
the singularity of type $\mathbf{D_7^0C_5}$ appear.

\vspace{2mm}
 
{\scriptsize $D_9^0:$} 

\noindent
$\mathbf{D_9^0C_5}:$ $z^2+x^2y+y^4z+wy^5=0$.
This can be resolved by five blow-ups.
After the first blow-up, a trivial product of $A_1$ with a curve and
the singularity of type $\mathbf{D_6^0C_3}$ appear.

\noindent
$\mathbf{D_9^0C_6}:$ $z^2+x^2y+y^4z+wy^6=0$.
This can be resolved by six blow-ups.
After the first blow-up, a trivial product of $A_1$ with a curve and
the singularity of type $\mathbf{D_7^0C_4}$ appear.

\noindent
$\mathbf{D_9^0C_7}:$ $z^2+x^2y+y^4z+wy^7=0$.
This can be resolved by seven blow-ups.
After the first blow-up, a trivial product of $A_1$ with a curve and
the singularity of type $\mathbf{D_7^0C_5}$ appear. 

\noindent
$\qquad\qquad$ $\vdots$

{\scriptsize $D_{2n}^{n-1}:$} 

\noindent
$\mathbf{D_{2n}^{n-1}B_{n-1}}:$ $ z^2+xy^n+xyz+wx^2+y^{2n-1}=0$.
This can be resolved by 
$n-1$ blow-ups.
After the first blow-up, 
the singularity of type $\mathbf{D_{2n-2}^{n-2}B_{n-2}}$ appears.

\noindent
$\mathbf{D_{2n}^{n-1}B_n}:$ $z^2+xy^n+xyz+wx^2=0$.
This can be resolved by $n$ blow-ups.
After the first blow-up, the singularity of type $\mathbf{D_{2n-2}^{n-2}B_{n-1}}$ appears.

\noindent
$\mathbf{D_{2n}^{n-1}C_{2n-2}}:$ $z^2+x^2y+xy^n+xyz+wy^{2(n-1)}=0$.
This can be resolved by $2n-2$ blow-ups.
After the first blow-up, a trivial product of $A_1$ with a curve and
the singularity of type $\mathbf{D_{2n-2}^{n-2}C_{2n-4}}$ appear.
 
\noindent
$\qquad\qquad$ $\vdots$

\vspace{2mm}

{\scriptsize $D_{2n}^{r}: (n/2 < r < n-1)$} 

\noindent
$\mathbf{D_{2n}^{r}B_{r}}:$ $z^2+xy^n+xy^{n-r}z+wx^2+y^{2r+1}=0$.
This can be resolved by $r$ blow-ups.
After the first blow-up, the singularity of type 
$\mathbf{D_{2n-2}^{r-1}B_{r-1}}$ appears.

\noindent
$\mathbf{D_{2n}^{r}B_{r+1}}:$ $z^2+xy^n+xy^{n-r}z+y^{2r+3}+wx^2=0$.
This can be resolved by $r+1$ blow-ups.
After the first blow-up, the singularity of type 
$\mathbf{D_{2n-2}^{r-1}B_{r}}$ appears.

\noindent
$\qquad\qquad$ $\vdots$

\noindent
$\mathbf{D_{2n}^{r}B_{n-1}}:$ $ z^2+xy^n+xy^{n-r}z+wx^2+y^{2n-1}=0$.
This can be resolved by $n-1$ blow-ups.
After the first blow-up, 
the singularity of type $\mathbf{D_{2n-2}^{r-1}B_{n-2}}$ appears.

\noindent
$\mathbf{D_{2n}^{r}B_n}:$ $ z^2+xy^n+xy^{n-r}z+wx^2=0$.
This can be resolved by $n$ blow-ups.
After the first blow-up, 
the singularity of type $\mathbf{D_{2n-2}^{r-1}B_{n-1}}$ appears.

\noindent
$\mathbf{D_{2n}^{r}C_{2r}}:$ $ z^2+x^2y+xy^n+xy^{n-r}z+wy^{2r}=0$.
This can be resolved by $2r$ blow-ups.
After the first blow-up, a trivial product of $A_1$ with a curve and
the singularity of type $\mathbf{D_{2n-2}^{r-1}C_{2r-2}}$ appear.

\noindent
$\mathbf{D_{2n}^{r}C_{2r+1}}:$ $ z^2+x^2y+xy^n+xy^{n-r}z+wy^{2r+1}=0$.
This can be resolved by $2r+1$ blow-ups.
After the first blow-up, a trivial product of $A_1$ with a curve and
the singularity of type $\mathbf{D_{2n-2}^{r-1}C_{2r-1}}$ appear.

\noindent
$\qquad\qquad$ $\vdots$

\noindent
$\mathbf{D_{2n}^{r}C_{2n-3}}:$ $ z^2+x^2y+xy^n+xy^{n-r}z+wy^{2n-3}=0$.
This can be resolved by $2n-3$ blow-ups.
After the first blow-up, a trivial product of $A_1$ with a curve and
the singularity of type $\mathbf{D_{2n-2}^{r-1}C_{2n-5}}$ appear.

\noindent
$\mathbf{D_{2n}^{r}C_{2n-2}}:$ $ z^2+x^2y+xy^n+xy^{n-r}z+wy^{2(n-1)}=0$.
This can be resolved by $2n-2$ blow-ups.
After the first blow-up, a trivial product of $A_1$ with a curve and
the singularity of type $\mathbf{D_{2n-2}^{r-1}C_{2n-4}}$ appear.

\noindent
$\qquad\qquad$ $\vdots$

\vspace{6mm}

{\scriptsize $D_{2n}^{r}:\ (1\leq r\leq n/2)$ }

\noindent
$\mathbf{D_{2n}^{r}B_{\lfloor\frac{n+1}{2}\rfloor}}:$ $z^2+xy^n+xy^{n-r}z+wx^2+y^{2\lfloor\frac{n+1}{2}\rfloor+1}=0$.
This can be resolved by 
$\lfloor\frac{n+1}{2}\rfloor$ blow-ups.
After the first blow-up, 
the singularity of type $\mathbf{D_{2n-2}^{r-1}B_{\lfloor\frac{n-1}{2}\rfloor}}$ 
$($resp. of type $\mathbf{D_{2n-4}^{\max\{0,\, r-2\}}B_{\lfloor\frac{n-1}{2}\rfloor}})$ appears, 
if $n$ is odd $($resp. $n$ is even$)$.
Here the symbol $\lfloor\phantom{11}\rfloor$ stands for rounding down.

\noindent
$\mathbf{D_{2n}^{r}B_{\lfloor\frac{n+1}{2}\rfloor+1}}:$ 
$z^2+xy^n+xy^{n-r}z+wx^2+y^{2\lfloor\frac{n+1}{2}\rfloor+3}=0$.
This can be resolved by 
$\lfloor\frac{n+1}{2}\rfloor+1$ blow-ups.
After the first blow-up, 
the singularity of type $\mathbf{D_{2n-2}^{r-1}B_{\lfloor\frac{n+1}{2}\rfloor}}$ appears.

\noindent
$\phantom{\mathbf{D_{2n}^{n-2}B_{n-2}}:}$ $\vdots$

\noindent
$\mathbf{D_{2n}^{r}B_{n-1}}:$ $ z^2+xy^n+xy^{n-r}z+wx^2+y^{2n-1}=0$.
This can be resolved by $n-1$ blow-ups.
After the first blow-up, 
the singularity of type $\mathbf{D_{2n-2}^{r-1}B_{n-2}}$ appears.

\noindent
$\mathbf{D_{2n}^{r}B_n}:$ $ z^2+xy^n+xy^{n-r}z+wx^2=0$.
This can be resolved by $n$ blow-ups.
After the first blow-up, 
the singularity of type $\mathbf{D_{2n-2}^{r-1}B_{n-1}}$ appears.

\noindent
$\mathbf{D_{2n}^{r}C_n}:$ $ z^2+x^2y+xy^n+xy^{n-r}z+wy^n=0$.
This can be resolved by $n$ blow-ups.
After the first blow-up, a trivial product of $A_1$ with a curve and
the singularity of type $\mathbf{D_{2n-4}^{\max\{0,\,r-2\}}C_{n-2}}$ appear.

\noindent
$\mathbf{D_{2n}^{r}C_{n+1}}:$ $ z^2+x^2y+xy^n+xy^{n-r}z+wy^{n+1}=0$.
This can be resolved by $n+1$ blow-ups.
After the first blow-up, a trivial product of $A_1$ with a curve and
the singularity of type $\mathbf{D_{2n-2}^{r-1}C_{n-1}}$ appear.

\noindent
$\phantom{\mathbf{D_{2n}^{n-2}B_{n-2}}:}$ $\vdots$

\noindent
$\mathbf{D_{2n}^{r}C_{2n-3}}:$ $ z^2+x^2y+xy^n+xy^{n-r}z+wy^{2n-3}=0$.
This can be resolved by $2n-3$ blow-ups.
After the first blow-up, a trivial product of $A_1$ with a curve and
the singularity of type $\mathbf{D_{2n-2}^{r-1}C_{2n-5}}$ appear.

\noindent
$\mathbf{D_{2n}^{r}C_{2n-2}}:$ $ z^2+x^2y+xy^n+xy^{n-r}z+wy^{2n-2}=0$.
This can be resolved by $2n-2$ blow-ups.
After the first blow-up, a trivial product of $A_1$ with a curve and
the singularity of type $\mathbf{D_{2n-2}^{r-1}C_{2n-4}}$ appear.

\noindent
$\phantom{\mathbf{D_{2n}^{n-2}B_{n-2}}:}$ $\vdots$

{\scriptsize $D_{2n}^{0}:$ }

\noindent
$\mathbf{D_{2n}^{0}B_{n/2}}:$ $z^2+xy^n+wx^2+y^{n+1}=0$ with
$n$ even.
This can be resolved by $n/2$ blow-ups.
After the first blow-up, 
the singularity of type 
$\mathbf{D_{2n-4}^0B_{(n-2)/2}}$ appears.

\noindent
$\mathbf{D_{2n}^{0}B_{(n+2)/2}}:$ $z^2+xy^n+wx^2+y^{n+3}=0$ with
$n$ even.
This can be resolved by $(n+2)/2$ blow-ups.
After the first blow-up, 
the singularity of type 
$\mathbf{D_{2n-2}^1B_{n/2}}$ appears.

\noindent
$\phantom{\mathbf{D_{2n}^{n-2}B_{n-2}}:}$ $\vdots$

\noindent
$\mathbf{D_{2n}^{0}B_{n-1}}:$ $ z^2+xy^n+wx^2+y^{2n-1}=0$ with
$n$ even.
This can be resolved by $n-1$ blow-ups.
After the first blow-up, 
the singularity of type $\mathbf{D_{2n-2}^1B_{n-2}}$ appears.

\noindent
$\mathbf{D_{2n}^{0}B_n}:$ $ z^2+xy^n+wx^2=0$ with 
$n$ even.
This can be resolved by $n$ blow-ups.
After the first blow-up, 
the singularity of type $\mathbf{D_{2n-2}^1B_{n-1}}$ appears.

\noindent
$\mathbf{D_{2n}^{0}C_n}:$ $ z^2+x^2y+wy^n=0$.
This can be resolved by $n$ blow-ups.
After the first blow-up, a trivial product of $A_1$ with a curve and
the singularity of type $\mathbf{D_{2n-4}^0C_{n-2}}$ appear.

\noindent
$\mathbf{D_{2n}^{0}C_{n+1}}:$ $ z^2+x^2y+xy^n+wy^{n+1}=0$. 
This can be resolved by $n+1$ blow-ups.
After the first blow-up, a trivial product of $A_1$ with a curve and
the singularity of type $\mathbf{D_{2n-2}^0C_{n-1}}$ appear.

\noindent
$\phantom{\mathbf{D_{2n}^{n-2}B_{n-2}}:}$ $\vdots$

\noindent
$\mathbf{D_{2n}^{0}C_{2n-3}}:$ $ z^2+x^2y+xy^n+wy^{2n-3}=0$.
This can be resolved by $2n-3$ blow-ups.
After the first blow-up, a trivial product of $A_1$ with a curve and
the singularity of type $\mathbf{D_{2n-2}^0C_{2n-5}}$ appear.

\noindent
$\mathbf{D_{2n}^{0}C_{2n-2}}:$ $ z^2+x^2y+xy^n+wy^{2(n-1)}=0$.
This can be resolved by $2n-2$ blow-ups.
After the first blow-up, a trivial product of $A_1$ with a curve and
the singularity of type $\mathbf{D_{2n-2}^0C_{2n-4}}$ appear.

\vspace{2mm}

{\scriptsize $D_{2n+1}^{n-1}:$} 

\noindent
$\mathbf{D_{2n+1}^{n-1}B_{n-1}}:$ $ z^2+xyz+wx^2+y^{2n-1}=0$.
This can be resolved by $n-1$ blow-ups.
After the first blow-up, 
the singularity of type $\mathbf{D_{2n-1}^{n-2}B_{n-2}}$ appears.

\noindent
$\mathbf{D_{2n+1}^{n-1}C_{2n-1}}:$ $ z^2+x^2y+y^nz+xyz+wy^{2n-1}=0$.
This can be resolved by $2n-1$ blow-ups.
After the first blow-up, a trivial product of $A_1$ with a curve and
the singularity of type $\mathbf{D_{2n-1}^{n-2}C_{2n-3}}$ appear.

\newpage %\vspace{2mm}

{\scriptsize $D_{2n+1}^{r}:$ $(n/2< r <n-1)$.}

\noindent
$\mathbf{D_{2n+1}^{r}C_{2r+1}}:$ $ z^2+x^2y+y^nz+xy^{n-r}z+wy^{2r+1}=0$.
This can be resolved by $2r+1$ blow-ups.
After the first blow-up, a trivial product of $A_1$ with a curve and
the singularity of type $\mathbf{D_{2n-1}^{r-1}C_{2r-1}}$ appear.

\noindent
$\phantom{\mathbf{D_{2n+1}^{n-2}B_{n-2}}:}$ $\vdots$

\noindent
$\mathbf{D_{2n+1}^{r}C_{2n-2}}:$ $ z^2+x^2y+y^nz+xy^{n-r}z+wy^{2n-2}=0$.
This can be resolved by $2n-2$ blow-ups.
After the first blow-up, a trivial product of $A_1$ with a curve and
the singularity of type $\mathbf{D_{2n-1}^{r-1}C_{2n-4}}$ appear.

\noindent
$\mathbf{D_{2n+1}^{r}C_{2n-1}}:$ $ z^2+x^2y+y^nz+xy^{n-r}z+wy^{2n-1}=0$.
This can be resolved by $2n-1$ blow-ups.
After the first blow-up, a trivial product of $A_1$ with a curve and
the singularity of type $\mathbf{D_{2n-1}^{r-1}C_{2n-3}}$ appear.

\vspace{2mm}

{\scriptsize  $D_{2n+1}^{r}:$ $(1\leq r \leq n/2)$.}

\noindent
$\mathbf{D_{2n+1}^{r}C_{n+1}}:$ $ z^2+x^2y+y^nz+xy^{n-r}z+wy^{n+1}=0$.
This can be resolved by $n+1$ blow-ups.
After the first blow-up, a trivial product of $A_1$ with a curve and
the singularity of type $\mathbf{D_{2n-2}^{r-1}C_{n-1}}$ appear.

\noindent
$\mathbf{D_{2n+1}^{r}C_{n+2}}:$ $ z^2+x^2y+y^nz+xy^{n-r}z+wy^{n+2}=0$.
This can be resolved by $n+2$ blow-ups.
After the first blow-up, a trivial product of $A_1$ with a curve and
the singularity of type $\mathbf{D_{2n-1}^{r-1}C_{n}}$ appear.

\noindent
$\phantom{\mathbf{D_{2n+1}^{n-2}B_{n-2}}:}$ $\vdots$

\noindent
$\mathbf{D_{2n+1}^{r}C_{2n-2}}:$ $ z^2+x^2y+y^nz+xy^{n-r}z+wy^{2n-2}=0$.
This can be resolved by $2n-2$ blow-ups.
After the first blow-up, a trivial product of $A_1$ with a curve and
the singularity of type $\mathbf{D_{2n-1}^{r-1}C_{2n-4}}$ appear.

\noindent
$\mathbf{D_{2n+1}^{r}C_{2n-1}}:$ $ z^2+x^2y+y^nz+xy^{n-r}z+wy^{2n-1}=0$.
This can be resolved by $2n-1$ blow-ups.
After the first blow-up, a trivial product of $A_1$ with a curve and
the singularity of type $\mathbf{D_{2n-1}^{r-1}C_{2n-3}}$ appear.

{\scriptsize $D_{2n+1}^{0}:$} 

\noindent
$\mathbf{D_{2n+1}^{0}C_{n+1}}:$ $ z^2+x^2y+y^nz+wy^{n+1}=0$.
This can be resolved by $n+1$ blow-ups.
After the first blow-up, a trivial product of $A_1$ with a curve and
the singularity of type $\mathbf{D_{2n-2}^0C_{n-1}}$ appear.

\noindent
$\mathbf{D_{2n+1}^{0}C_{n+2}}:$ $ z^2+x^2y+y^nz+wy^{n+2}=0$.
This can be resolved by $n+2$ blow-ups.
After the first blow-up, a trivial product of $A_1$ with a curve and
the singularity of type $\mathbf{D_{2n-1}^0C_{n}}$ appear.

\noindent
$\phantom{\mathbf{D_{2n+1}^{n-2}B_{n-2}}:}$ $\vdots$

\noindent
$\mathbf{D_{2n+1}^{0}C_{2n-2}}:$ $ z^2+x^2y+y^nz+wy^{2n-2}=0$.
This can be resolved by $2n-2$ blow-ups.
After the first blow-up, a trivial product of $A_1$ with a curve and
the singularity of type $\mathbf{D_{2n-1}^0C_{2n-4}}$ appear.

\noindent
$\mathbf{D_{2n+1}^{0}C_{2n-1}}:$ $ z^2+x^2y+y^nz+wy^{2n-1}=0$.
This can be resolved by $2n-1$ blow-ups.
After the first blow-up, a trivial product of $A_1$ with a curve and
the singularity of type $\mathbf{D_{2n-1}^0C_{2n-3}}$ appear.

\vspace{2mm}
{\scriptsize $E_7^3:$} 

\noindent
$\mathbf{E_7^3F_4}:$ $ z^2+x^3+xy^3+xyz+wy^4=0$.
This can be resolved by four blow-ups.
After the first blow-up, 
the singularity of type $\mathbf{D_6^2B_3}$ appears.

\vspace{2mm}
{\scriptsize $E_7^2:$} 

\noindent
$\mathbf{E_7^2F_4}:$ $ z^2+x^3+xy^3+y^3z+wy^4=0$.
This can be resolved by three blow-ups.
After the first blow-up, 
the singularity of type $\mathbf{D_6^1B_3}$ appears.

\vspace{2mm}
{\scriptsize $E_7^1:$} 

\noindent
$\mathbf{E_7^1F_4}:$ $ z^2+x^3+xy^3+wy^4=0$.
This can be resolved by four blow-ups.
After the first blow-up, 
the singularity of type $\mathbf{D_6^1B_3}$ appears.

\vspace{2mm}
{\scriptsize $E_8^3:$} 

\noindent
$\mathbf{E_8^3F_4}:$ $ z^2+x^3+y^3z+wy^4=0$.
This can be resolved by four blow-ups.
After the first blow-up, 
the singularity of type $\mathbf{D_6^1B_3}$ appears.

{\scriptsize $E_8^2:$} 

\noindent
$\mathbf{E_8^2F_4}:$ $ z^2+x^3+xy^2z+wy^4=0$.
This can be resolved by three blow-ups.
After the first blow-up, 
the singularity of type $\mathbf{D_6^1B_3}$ appears.

\vspace{2mm}
{\scriptsize $E_8^1:$} 

\noindent
$\mathbf{E_8^1F_4}:$ $ z^2+x^3+xy^3z+wy^4=0$.
This can be resolved by four blow-ups.
After the first blow-up, 
the singularity of type $\mathbf{D_6^1B_3}$ appears.

%\vspace{2mm}
{\scriptsize $E_8^0:$} 

\noindent
\fbox{$\mathbf{E_8^0F_4}:$ $ z^2+x^3+wy^4=0$.} 
This can be resolved by four blow-ups.
After the first blow-up, 
the singularity of type $\mathbf{D_6^1B_3}$ appears.
}

\end{theorem}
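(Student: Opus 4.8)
The plan is to treat Theorem~\ref{main} as a long verification whose cases are tied together by reductions: each equation in the families $\mathbf{D_{2n}^{r}}$, $\mathbf{D_{2n+1}^{r}}$, $\mathbf{E_7}$, $\mathbf{E_8}$ is, after a single blow-up, converted into an equation with strictly smaller invariants (in the ``$C$'' cases together with a trivial product $A_1\times(\text{curve})$, $z^2+xy=0$, which one further blow-up resolves), so I would run an induction on the $D$-index, taking $\mathbf{D_4^0},\mathbf{D_4^1},\mathbf{D_5^0},\mathbf{D_5^1}$ and $A_1\times(\text{curve})$ as base cases done by hand and noting that the $\mathbf{E_7},\mathbf{E_8}$ equations reduce after one blow-up to the $\mathbf{D_6}$ cases. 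For (i) I would compute the Jacobian ideal $(f,\partial_xf,\partial_yf,\partial_zf,\partial_wf)$; since $\mathrm{char}\,k=2$ the summand $z^2$ contributes nothing to $\partial_zf$, and a short computation shows that $\partial_wf$ forces $x=0$ (or $y=0$), after which the remaining partials together with $f$ cut out a single coordinate axis, which is $C$, a smooth rational curve. For (ii) I would move a closed point of $C$ to the origin and kill the constant terms that appear by coordinate changes adapted to characteristic $2$ --- substitutions $z\mapsto z+\sqrt{c}\,m(x,y,w)$ completing a square (this uses $k$ algebraically closed) together with linear changes in $w$ --- after which the equation returns to its original form, so all the completed local rings $\hat{\mathcal O}_{X,x}$ for $x\in C$ are isomorphic.

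For (iii) I would blow up $C$ (at later stages, the current one-dimensional singular locus) inside the ambient $\mathbf A^4$, pass to the strict transform $\tilde X$, and on each affine chart substitute the monomial parametrization of the blow-up, divide by the appropriate power of the exceptional coordinate, and read off the new equation; comparison with Artin's normal forms identifies the singularities that appear as exactly those listed after each equation --- which is the inductive step --- and after finitely many such blow-ups $\tilde X$ is smooth. The one structural point is crepancy: at every stage the center is a \emph{smooth} curve, of codimension $3$ in $\mathbf A^4$, along which $f$ has multiplicity exactly $2$ (the quadratic term $z^2+\cdots$ survives modulo the cube of the ideal of the center, as is visible from the explicit equations), so writing $\sigma$ for the blow-up of $\mathbf A^4$ and $\pi\colon\tilde X\to X$ for its restriction, adjunction gives
\[
K_{\tilde X}=\bigl(K_{\mathrm{Bl}_{C}\mathbf A^4}+\tilde X\bigr)\big|_{\tilde X}
=\sigma^*(K_{\mathbf A^4}+X)\big|_{\tilde X}+(2-2)\,E=\pi^*K_X ,
\]
whence every blow-up, and therefore the composite resolution, is crepant; that this is a resolution in the required sense, and the vanishing of the relevant higher direct images, follows from the material recalled in \S2 (the $X$ here are hypersurfaces, hence Cohen--Macaulay).

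For (v) I would substitute a generic linear form for one variable to realise a general hyperplane section $H$ through a closed point of $C$; inspection of Artin's list of rational double points in characteristic $2$ identifies the resulting surface over $k$ with the type $\mathbf{D_n^{r}}$ (resp.\ $\mathbf{E_7^{r}}$, $\mathbf{E_8^{r}}$) named on the left, while repeating the computation \emph{without} specializing the transverse parameter --- i.e.\ over the imperfect function field $K=k(C)$ --- yields a two-dimensional excellent normal local ring whose minimal resolution graph (equivalently, via the dictionary between characteristic-$2$ $ADE$ singularities and the non-simply-laced rational double points over imperfect fields) is the Lipman type $\mathbf{B_n}$, $\mathbf{C_n}$, or $\mathbf{F_4}$ recorded in the name. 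This simultaneously gives (iv): if $\hat{\mathcal O}_{X,x}$ were isomorphic to $k[[x,y,z]]/(g)\,\hat\otimes\,k[[w]]$ with $\Spec k[[x,y,z]]/(g)$ a rational double point, then --- $g$ being defined over the \emph{perfect} field $k$ --- the induced singularity over $K$ would be $g$ base-changed, hence still of simply-laced type $A$, $D$ or $E$, contradicting the non-simply-laced type just computed (alternatively, for equations whose first exceptional divisor is already singular, or which need fewer blow-ups than the number of nodes of the relevant Dynkin diagram, one may instead compare the exceptional divisors of the resolution of (iii) with the smooth ones of $(\text{minimal resolution of }g)\times(\text{curve})$). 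The hard part will not be these conceptual points --- the multiplicity-two adjunction step and the perfect-field argument are both short --- but the bookkeeping in (iii): performing the chart-by-chart blow-up and the characteristic-$2$ square-completing substitutions correctly for every equation in the infinite families, checking that the singularity that emerges is exactly the claimed one so that the induction closes, and pinning down the base cases and the appeals to Artin's and Lipman's classifications in (v).
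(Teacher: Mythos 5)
Your overall architecture --- Jacobian criterion for (i), translation along $C$ plus characteristic-$2$ square-completions for (ii), induction on the $D$-index via chart-by-chart blow-ups with the $E_7$, $E_8$ equations feeding into the $D_6$ cases for (iii), and Lipman's algorithm over a function field for (v) --- is the same as the paper's, and your explicit multiplicity-two adjunction computation for crepancy is a sensible way to make precise what the paper leaves implicit (you should, however, also record that when two singular curves appear after a blow-up they are disjoint, so that the next center is again a smooth curve).

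The genuine gap is in (iv). Your primary argument is: if $\hat{\mathcal O}_{X,x}\cong k[[x,y,z]]/(g)\hat\otimes k[[w]]$ at a closed point $x\in C$, then the singularity of the generic hyperplane section over the function field would be $g$ base-changed, hence of simply-laced Lipman type, contradicting the non-simply-laced type of (v). But the type in (v) is computed for the generic section $w=\xi x+\eta y+\theta z+\iota$ with $\iota$ a free parameter; its singular point has $w$-coordinate $\iota$, transcendental over $k$, so it lies over the generic point of $C$ and its completed local ring is not controlled by $\hat{\mathcal O}_{X,x}$ at any closed point. Formal triviality at every closed point of $C$ (which (ii) propagates from one point to all) does not imply that the function-field behavior is the constant one base-changed --- detecting exactly such closed-point-invisible twisting is the whole point of these examples, so this implication cannot be taken for free. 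The argument can be repaired by cutting with hyperplanes through the fixed point $x$ (set $\iota=0$): base-changing the formal isomorphism to the parameter field and solving for $W$ by the formal implicit function theorem then does give $K[[X,Y,Z]]/(g)$ for the generic such section. But that is a different computation from (v): the paper's Gr\"obner bases visibly degenerate at $\iota=0$ (some entries contain $z^2/\iota$), and for instance for $\mathbf{D_8^0B_4}$ the cubic term of the through-the-origin section acquires a repeated factor, so neither the Artin nor the Lipman type of that section is established by the (v) computation; you would have to redo the Lipman analysis with $\iota=0$ and verify non-simply-lacedness case by case. The paper instead obtains (iv) inside the blow-up induction of (iii): a product $(\text{rational double point})\times(\text{curve})$ would blow up to products with smooth exceptional data at every stage, which is incompatible with the singularities and exceptional divisors recorded in the tables. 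Your parenthetical ``alternative'' is essentially this argument, but you offer it only for selected cases and without the inductive bookkeeping that makes it close; as written, the main route to (iv) does not go through.
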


\begin{proof}
We divide the set of equations in question into three subsets, 
i.e.  the subset consisting of equations labeled as
type $D_4^1$ through $D_7^0$ (cf. Table 3), 
the subset consisting of equations of type 
$D_{2n}^{n-1}$ through $D_{2n+1}^{0}$ with $n\geq 4$ (cf. Table 4)
and thirdly the subset consisting of equations labeled as type 
$E_7^3$ through $E_8^0$ (cf. Table~2).
We prove the assertion for type $D_N$ by induction on $N\geq 4$, 
then one by one for type $E_7$, $E_8$.

As the proof of i), ii), iii) and iv) consists of repetition of  
elementary procedures,
i.e.  blowing up, applying Jacobian criterion for regularity, 
exchanging coordinates, etc., 
we present here only the essential step of induction for 
assertion iii) and iv),
i.e.  type $D_{2n}^{n-1}$ through $D_{2n+1}^0$ with $n\geq 4$.

By Jacobian criterion for regularity over
an algebraically closed field $k$, 
we find that the line defined by 
$x=y=z=0$ is the singular locus of 
each hypersurface.
So we blow up this line and examine the resulting singularities 
first in the chart 
$x'=x/y$, $y'=y$, $z'=z/y$,
($w$ remains unchanged).
The equations are obtained easily (cf. the tables below
which should be combined with Table 4), 
so we identify the equations using the induction hypothesis.
Verification goes straightforward, 
however some coordinate changes are required
in the following cases.

For the singularity of type $\mathbf{D_{2n}^{r}B_{\lfloor (n+1)/2 \rfloor}}$ 
with $1\leq r \leq n/2$ and $4\leq n$,
we have the equation after a blow-up
$(z')^2+x'(y')^{n-1}+x'(y')^{n-r}z'+w(x')^2+(y')^{2\lfloor\frac{n+1}{2}\rfloor-1}=0$.
First we consider the case $n$ is even.
Introduce a new coordinate
$y'':=y'+x'$, then the equation is
$(z')^2+x'(x'+y'')^{n-1}+x'(x'+y'')^{n-r}z'+w(x')^2+(x'+y'')^{n-1}=0$.
Then factor out $(x')^2$ and obtain 
$(z')^2+x'(y'')^{n-r}z'+x'(y'')^{n-2}
+u_1(y'')^{n-1}+w'(x')^2=0$ with 
$u_1$ a unit in $k[[x', y'', z', w]]$ and
an appropriate coordinate $w'$.
If $r>2$, we multiply
$x', y'', z', w'$ by some units and may replace
$u_1$ by $1$.
The resulting singularity is of type 
$\mathbf{D_{2(n-2)}^{r-2}B_{\lfloor\frac{(n-2)+1}{2}\rfloor}}$.
If $r\leq 2$, we put $u_2:=1+(y'')^{2-r}z'$ and have the equation
$(z')^2+u_2x'(y'')^{n-2}
+u_1(y'')^{n-1}+w'(x')^2=0$ with $u_1, u_2$ units.
We may replace $u_1, u_2$ by $1$ in a similar way as above. 
This is the singularity of type 
$\mathbf{D_{2(n-2)}^{0}B_{(n-2)/2}}$.
Secondly we consider the case $n$ is odd.
But a straightforward checking finds 
the singularity is of type 
$\mathbf{D_{2(n-1)}^{r-1}B_{\lfloor\frac{(n-1)+1}{2}\rfloor}}$.

For the singularity of type $\mathbf{D_{2n}^{r}C_{n}}$ 
with $1\leq r \leq n/2$ and $4\leq n$,
after a blow-up, we have the hypersurface  
$(z')^2+(x')^2{y'}+x'(y')^{n-1}+{x'}(y')^{n-r}{z'}+w(y')^{n-2}=0$.
If $r>2$, this is the singularity of type 
$\mathbf{D_{2(n-2)}^{r-2}C_{n-2}}$.  
If $r\leq 2$, we introduce a new coordinate
$w':=w+{x'}(y')^{2-r}{z'}$ and have  
$(z')^2+(x')^2{y'}+x'(y')^{n-1}+w'(y')^{n-2}=0$.
This is the singularity of type $\mathbf{D_{2(n-2)}^{0}C_{n-2}}$
if $n\geq 5$, and of type $\mathbf{D_{4}^{0}B_{2}}$ if $n=4$.

For $\mathbf{D_{2n}^{0}B_{n/2}}$ with $4\leq n$ even,
the hypersurface after a blow-up is 
$(z')^2+{x'}(y')^{n-1}+{y'}^{n-1}+w(x')^2=0$.
Another new coordinate $y'':=y'+x'$ gives
$(z')^2+{x'}{(y''+x')}^{n-1}+{(y''+x')}^{n-1}+w(x')^2=0$.
Then factoring out $(x')^2$, 
we have 
$(z')^2+u_1{x'}(y'')^{n-2}+(y'')^{n-1}+{w'}(x')^2=0$
with an appropriate coordinate $w'$
and a unit $u_1$.
This $u_1$ may be replaced by $1$ and the singularity is of type 
$\mathbf{D_{2(n-2)}^0B_{(n-2)/2}}$.

{
\bigskip\tiny 
\begin{tabular}{l|l|l|l}
\hline & \textrm{Original} & 
\textrm{Equation after a blow-up $x'=x/y$, $y'=y$, $z'=z/y$.}&
\textrm{Resulting} \\ 
%\hline $\vdots$ & $\vdots$ & $\vdots$ &\\ 
\hline 
$D_{2n}^{r}$ 
& $\mathbf{D_{2n}^{r}B_{r}}$ &
${y'}^2[{z'}^2+{x'}{y'}^{n-1}+{x'}{y'}^{n-r}{z'}+w{x'}^2+{y'}^{2r-1}]=0$
& $\mathbf{D_{2n-2}^{r-1}B_{r-1}}$ \\ 
& $\mathbf{D_{2n}^{r}B_{r+1}}$ &
${y'}^2[{z'}^2+{x'}{y'}^{n-1}+{x'}{y'}^{n-r}{z'}+w{x'}^2+{y'}^{2r+1}]=0$
& $\mathbf{D_{2n-2}^{r-1}B_{r}}$\\ 
& $\quad\vdots$ & $\quad\vdots$ & $\quad\vdots$ \\ 
& $\mathbf{D_{2n}^{r}B_{n-1}}$ &
${y'}^2[{z'}^2+{x'}{y'}^{n-1}+{x'}{y'}^{n-r}{z'}+w{x'}^2+{y'}^{2n-3}]=0$
& $\mathbf{D_{2n-2}^{r-1}B_{n-2}}$\\ 
& $\mathbf{D_{2n}^{r}B_{n}}$ &
${y'}^2[{z'}^2+{x'}{y'}^{n-1}+{x'}{y'}^{n-r}{z'}+w{x'}^2]=0$
& $\mathbf{D_{2n-2}^{r-1}B_{n-1}}$\\ 
& $\mathbf{D_{2n}^{r}C_{2r}}$ &
${y'}^2[{z'}^2+{x'}^2{y'}+{x'}{y'}^{n-1}+{x'}{y'}^{n-r}{z'}+w{y'}^{2r-2}]=0$
& $\mathbf{D_{2n-2}^{r-1}C_{2r-2}}$\\ 
& $\mathbf{D_{2n}^{r}C_{2r+1}}$ &
${y'}^2[{z'}^2+{x'}^2{y'}+{x'}{y'}^{n-1}+{x'}{y'}^{n-r}{z'}+w{y'}^{2r-1}]=0$
& $\mathbf{D_{2n-2}^{r-1}C_{2r-1}}$\\ 
& $\quad\vdots$ & $\quad\vdots$ & $\quad\vdots$\\ 
& $\mathbf{D_{2n}^{r}C_{2n-2}}$ &
${y'}^2[{z'}^2+{x'}^2{y'}+{x'}{y'}^{n-1}+{x'}{y'}^{n-r}{z'}+w{y'}^{2n-4}]=0$
& $\mathbf{D_{2n-2}^{r-1}C_{2n-4}}$\\ 
\cline{2-4} $\quad\vdots$ & $\quad\vdots$ & $\quad\vdots$ & \\ 
\cline{2-4}
$D_{2n}^{r}$ & $\mathbf{D_{2n}^{r}B_{\lfloor (n+1)/2\rfloor}}$ &
${y'}^2[{z'}^2+{x'}{y'}^{n-1}+{x'}{y'}^{n-r}{z'}+w{x'}^2+{y'}^{2\lfloor(n+1)/2\rfloor-1}]=0$
& (*) \\ 
& $\mathbf{D_{2n}^{r}B_{\lfloor
    (n+1)/2\rfloor+1}}$ &
${y'}^2[{z'}^2+{x'}{y'}^{n-1}+{x'}{y'}^{n-r}{z'}+w{x'}^2+{y'}^{2\lfloor
  (n+1)/2\rfloor+1}]=0$ & (**) \\ 
& $\quad\vdots$ & $\quad\vdots$ & $\quad\vdots$ \\ 
& $\mathbf{D_{2n}^{r}B_{n-1}}$ &
${y'}^2[{z'}^2+{x'}{y'}^{n-1}+{x'}{y'}^{n-r}{z'}+w{x'}^2+{y'}^{2n-3}]=0$
& $\mathbf{D_{2n-2}^{r-1}B_{n-2}}$ \\ 
& $\mathbf{D_{2n}^{r}B_{n}}$ 
& ${y'}^2[{z'}^2+{x'}{y'}^{n-1}+{x'}{y'}^{n-r}{z'}+w{x'}^2]=0$
& $\mathbf{D_{2n-2}^{r-1}B_{n-1}}$\\ 
& $\mathbf{D_{2n}^{r}C_{n}}$ &
${y'}^2[{z'}^2+{x'}^2{y'}+{x'}{y'}^{n-1}+{x'}{y'}^{n-r}{z'}+w{y'}^{n-2}]=0$
& (***) \\ 
& $\mathbf{D_{2n}^{r}C_{n+1}}$ &
${y'}^2[{z'}^2+{x'}^2{y'}+{x'}{y'}^{n-1}+{x'}{y'}^{n-r}{z'}+w{y'}^{n-1}]=0$
& $\mathbf{D_{2(n-1)}^{r-1}C_{n-1}}$ \\ 
& $\quad\vdots$ & $\quad\vdots$ & $\quad\vdots$ \\ 
& $\mathbf{D_{2n}^{r}C_{2n-3}}$ &
${y'}^2[{z'}^2+{x'}^2{y'}+{x'}{y'}^{n-1}+{x'}{y'}^{n-r}{z'}+w{y'}^{2(n-1)-3}]=0$
& $\mathbf{D_{2(n-1)}^{r-1}C_{2n-5}}$\\ 
& $\mathbf{D_{2n}^{r}C_{2n-2}}$ &
${y'}^2[{z'}^2+{x'}^2{y'}+{x'}{y'}^{n-1}+{x'}{y'}^{n-r}{z'}+w{y'}^{2(n-1)-2}]=0$
& $\mathbf{D_{2(n-1)}^{r-1}C_{2n-4}}$\\ 
\cline{2-4} $\quad\vdots$ & $\quad\vdots$ & $\quad\vdots$ & \\ 
\cline{2-4}
$D_{2n}^0$ 
& $\mathbf{D_{2n}^0B_{n/2}}$ 
& ${y'}^2[{z'}^2+{x'}{y'}^{n-1}+w{x'}^2+{y'}^{n-1}]=0$ &
$\mathbf{D_{2n-4}^0B_{(n-2)/2}}$ \\ 
& $\mathbf{D_{2n}^0B_{(n+2)/2}}$ &
${y'}^2[{z'}^2+{x'}{y'}^{n-1}+w{x'}^2+{y'}^{n+1}]=0$ & 
$\mathbf{D_{2n-2}^1B_{n/2}}$ \\ 
& $\quad\vdots$ & $\quad\vdots$ & $\quad\vdots$ \\ 
& $\mathbf{D_{2n}^0B_{n-1}}$ &
${y'}^2[{z'}^2+{x'}{y'}^{n-1}+w{x'}^2+{y'}^{2n-3}]=0$ & 
$\mathbf{D_{2n-2}^1B_{n-2}}$ \\ 
& $\mathbf{D_{2n}^0B_{n}}$ &
${y'}^2[{z'}^2+{x'}{y'}^{n-1}+w{x'}^2]=0$ & 
$\mathbf{D_{2n-2}^1B_{n-1}}$ \\ 
& $\mathbf{D_{2n}^0C_{n}}$ &
${y'}^2[{z'}^2+{x'}^2{y'}+{x'}{y'}^{n-1}+w{y'}^{n-2}]=0$ & $\mathbf{D_{2n-4}^0C_{n-2}}$ \\ 
& $\mathbf{D_{2n}^0C_{n+1}}$ &
${y'}^2[{z'}^2+{x'}^2{y'}+{x'}{y'}^{n-1}+w{y'}^{n-1}]=0$ & $\mathbf{D_{2n-2}^0C_{n-1}}$\\ 
& $\quad\vdots$ & $\quad\vdots$ & $\quad\vdots$ \\ 
& $\mathbf{D_{2n}^0C_{2n-3}}$ &
${y'}^2[{z'}^2+{x'}^2{y'}+{x'}{y'}^{n-1}+w{y'}^{2n-5}]=0$&$\mathbf{D_{2n-2}^0C_{2n-5}}$\\ 
& $\mathbf{D_{2n}^0C_{2n-2}}$ &
${y'}^2[{z'}^2+{x'}^2{y'}+{x'}{y'}^{n-1}+w{y'}^{2n-4}]=0$&$\mathbf{D_{2n-2}^0C_{2n-4}}$\\ 
\hline %$\quad\vdots$ & $\quad\vdots$ & $\quad\vdots$ & \\
\end{tabular}
\begin{flushleft}
{\tiny
(*) $\mathbf{D_{2(n-2)}^{\max\{0, r-2\}}B_{\lfloor\frac{n-1}{2}\rfloor}}$ if $n$ is even, 
$\mathbf{D_{2(n-1)}^{r-1}B_{\lfloor\frac{n-1}{2}\rfloor}}$ if $n$ is odd. \\
(**)
$\mathbf{D_{2(n-1)}^1B_{n-1}}$ if $n$ is even and $r=1$,
$\mathbf{D_{2(n-1)}^{r-1}B_{\lfloor\frac{n+1}{2}\rfloor}}$ otherwise.   \\
(***) $\mathbf{D_{2(n-2)}^{\max\{0, r-2\}}C_{n-2}}$ if $n\geq 5$, 
$\mathbf{D_{4}^{0}B_{2}}$ if $n=4$. 
}
\end{flushleft}
}

For $\mathbf{D_{2n}^{0}B_{(n+2)/2}}$ with $4\leq n$ even,
we have $(z')^2+{x'}(y')^{n-1}+(y')^{n+1}+w(x')^2=0$ after a blow-up.
Then introduce a new coordinate 
$y'':=y'+z'$,  we have
$(z')^2+{x'}(y''+z')^{n-1}+(y''+z')^{n+1}+w(x')^2=0$.
Then factor out $(z')^2$ and we have
$u_1(z')^2+{x'}(y'')^{n-1}+{x'}(y'')^{n-2}z'+(y'')^{n+1}+(y'')^{n}z'+w(x')^2=0$
with a unit $u_1$ in $k[[x', y'', z', w]]$.
Another new coordinate
$x'':=x'+y''z'$ makes the equation
$u_2(z')^2+{x''}(y'')^{n-1}+{x''}(y'')^{n-2}z'+(y'')^{n+1}+w(x'')^2=0$
with a unit $u_2$.
This unit $u_2$ may be replaced by $1$ by multiplying  
$x'', y'', z', w$ by some units
and we have the singularity of type 
$\mathbf{D_{2(n-1)}^1B_{n/2}}$.

$\quad\vdots$

For $\mathbf{D_{2n}^{0}B_{n-1}}$ with $4\leq n$ even,
we have  $(z')^2+x'(y')^{n-1}+(y')^{2n-3}+w(x')^2=0$ after a blow-up.
Then introduce new coordinates as in  
$\mathbf{D_{2n}^{0}B_{(n+2)/2}}$ above, we have
$u_2(z')^2+{x''}(y'')^{n-1}+{x''}(y'')^{n-2}z'+(y'')^{2n-3}+w(x'')^2=0$
with a unit $u_2$.
Multiplying  
$x'', y'', z', w$ by some units,
we replace $u_2$ by $1$ and have the singularity of type 
$\mathbf{D_{2(n-1)}^1B_{(n-1)-1}}$.

For $\mathbf{D_{2n}^{0}B_n}$ with $4\leq n$ even,
we have $ (z')^2+x'(y')^{n-1}+w(x')^2=0$ after a blow-up.
Then introduce a new coordinate $y'':=y'+z'$, we have
${z'}^2+{x'}{(y''+z')}^{n-1}+w(x')^2=0$.
Then factoring out $(z')^2$, 
we have 
$u_1(z')^2+{x'}(y'')^{n-1}+x'(y'')^{n-2}z'+{w}(x')^2=0$
with a unit $u_1$.
Then multiplying  
$x', y'', z', w$ by some units again,
we may replace $u_1$ by $1$ and have the singularity of type 
$\mathbf{D_{2(n-1)}^1B_{n-1}}$.

For $\mathbf{D_{2n}^{0}C_{n}}$ with $4 \leq n$,
we have $(z')^2+(x')^2y'+x'(y')^{n-1}+w(y')^{n-2}=0$ after a blow-up.
A new coordinate $w':=w+x'y'+x'$ makes the equation
$(z')^2+(x')^2y'+x'(y')^{n-2}+w'(y')^{n-2}=0$. 
This is the singularity of type $\mathbf{D_{2(n-2)}^0C_{n-2}}$.

{
\bigskip
\tiny 
\begin{tabular}{l|l|l|l}
\hline & \textrm{Original} & 
\textrm{Equation after a blow-up $x'=x/y$, $y'=y$, $z'=z/y$.}&
\textrm{Resulting} \\ 
%\hline $\quad\vdots$ & $\quad\vdots$ & $\quad\vdots$ &\\ 
\hline 
$D_{2n+1}^{n-1}$ 
& $\mathbf{D_{2n+1}^{n-1}B_{n-1}}$ &
${y'}^2[{z'}^2+{x'}{y'}{z'}+w{x'}^2+{y'}^{2n-3}]=0$ & 
$\mathbf{D_{2n-1}^{n-2}B_{n-2}}$ \\ 
& $\mathbf{D_{2n+1}^{n-1}C_{2n-1}}$ &
${y'}^2[{z'}^2+{x'}^2{y'}+{y'}^{n-1}{z'}+{x'}{y'}{z'}+w{y'}^{2n-3}]=0$
& $\mathbf{D_{2n-1}^{n-2}C_{2n-3}}$\\ 
\cline{2-4} $\quad\vdots$ & $\quad\vdots$ & $\quad\vdots$ &  \\ 
\cline{2-4}
$D_{2n+1}^r$ 
& $\mathbf{D_{2n+1}^rC_{2r+1}}$ &
${y'}^2[{z'}^2+{x'}^2{y'}+{x'}{y'}^{n-r}{z'}+{y'}^{n-1}{z'}+w{y'}^{2r-1}]=0$
& $\mathbf{D_{2n-1}^{r-1}C_{2r-1}}$ \\ 
& $\mathbf{D_{2n+1}^rC_{2r+2}}$ &
${y'}^2[{z'}^2+{x'}^2{y'}+{x'}{y'}^{n-r}{z'}+{y'}^{n-1}{z'}+w{y'}^{2r}]=0$
& $\mathbf{D_{2n-1}^{r-1}C_{2r}}$ \\ 
& $\quad\vdots$ & $\quad\vdots$ & $\quad\vdots$ \\ 
& $\mathbf{D_{2n+1}^rC_{2n-2}}$ &
${y'}^2[{z'}^2+{x'}^2{y'}+{x'}{y'}^{n-r}{z'}+{y'}^{n-1}{z'}+w{y'}^{2n-4}]=0$ & 
$\mathbf{D_{2n-1}^{r-1}C_{2n-4}}$ \\ 
& $\mathbf{D_{2n+1}^rC_{2n-1}}$ &
${y'}^2[{z'}^2+{x'}^2{y'}+{x'}{y'}^{n-r}{z'}+{y'}^{n-1}{z'}+w{y'}^{2n-3}]=0$
& $\mathbf{D_{2n-1}^{r-1}C_{2n-3}}$ \\ 
\cline{2-4} 
$\quad\vdots$ & $\quad\vdots$ & $\quad\vdots$ & \\ 
\cline{2-4}
$D_{2n+1}^r$ & $\mathbf{D_{2n+1}^rC_{n+1}}$ &
${y'}^2[{z'}^2+{x'}^2{y'}+{x'}{y'}^{n-r}{z'}+{y'}^{n-1}{z'}+w{y'}^{n-1}]=0$
&  $\mathbf{D_{2n-2}^{r-1}C_{n-1}}$ \\ 
& $\mathbf{D_{2n+1}^rC_{n+2}}$ &
${y'}^2[{z'}^2+{x'}^2{y'}+{x'}{y'}^{n-r}{z'}+{y'}^{n-1}{z'}+w{y'}^{n}]=0$
& $\mathbf{D_{2n-1}^{r-1}C_{n}}$ \\ 
& $\quad\vdots$ & $\quad\vdots$ & $\quad\vdots$\\ 
& $\mathbf{D_{2n+1}^rC_{2n-2}}$ &
${y'}^2[{z'}^2+{x'}^2{y'}+{x'}{y'}^{n-r}{z'}+{y'}^{n-1}{z'}+w{y'}^{2n-4}]=0$
& $\mathbf{D_{2n-1}^{r-1}C_{2n-4}}$ \\ 
& $\mathbf{D_{2n+1}^rC_{2n-1}}$ &
${y'}^2[{z'}^2+{x'}^2{y'}+{x'}{y'}^{n-r}{z'}+{y'}^{n-1}{z'}+w{y'}^{2n-3}]=0$
& $\mathbf{D_{2n-1}^{r-1}C_{2n-3}}$ \\ 
\cline{2-4} $\quad\vdots$ & $\quad\vdots$ & $\quad\vdots$ & \\ 
\cline{2-4}
$D_{2n+1}^0$ 
& $\mathbf{D_{2n+1}^0C_{n+1}}$ &
${y'}^2[{z'}^2+{x'}^2{y'}+{y'}^{n-1}{z'}+w{y'}^{n-1}]=0$ &
$\mathbf{D_{2n-2}^0C_{n-1}}$\\ 
& $\mathbf{D_{2n+1}^0C_{n+2}}$ &
${y'}^2[{z'}^2+{x'}^2{y'}+{y'}^{n-1}{z'}+w{y'}^{n}]=0$ &
$\mathbf{D_{2n-1}^0C_{n}}$\\ 
& $\quad\vdots$ & $\quad\vdots$ & $\quad\vdots$\\ 
& $\mathbf{D_{2n+1}^0C_{2n-2}}$ &
${y'}^2[{z'}^2+{x'}^2{y'}+{y'}^{n-1}{z'}+w{y'}^{2n-4}]=0$ &
$\mathbf{D_{2n-1}^0C_{2n-4}}$ \\ 
& $\mathbf{D_{2n+1}^0C_{2n-1}}$ &
${y'}^2[{z'}^2+{x'}^2{y'}+{y'}^{n-1}{z'}+w{y'}^{2n-3}]=0$ &
$\mathbf{D_{2n-1}^0C_{2n-3}}$ \\ 
\hline %$\quad\vdots$ & $\quad\vdots$ & $\quad\vdots$ & \\
\end{tabular}
\bigskip
}

For $\mathbf{D_{2n+1}^{r}C_{n+1}}$ with $1\leq r \leq n/2$ and $4\leq n$.
We have  $(z')^2+(x')^2y'+x'(y')^{n-r}z'+(y')^{n-1}z+w(y')^{n-1}=0$.
A new coordinate $w':=w+z'$ makes the equation
$(z')^2+(x')^2y'+x'(y')^{n-r}z'+w(y')^{n-1}=0$.
This is the singularity of type $\mathbf{D_{2(n-1)}^{r-1}C_{n-1}}$.

For the singularity of type $\mathbf{D_{2n+1}^{0}C_{n+1}}$ with 
$1\leq r \leq n/2$ and $4\leq n$.
We have 
$ (z')^2+(x')^2y'+(y')^{n-1}z'+w(y')^{n-1}=0$.
A new coordinate $w':=w+z'$ makes the equation
$ (z')^2+(x')^2y'+w'(y')^{n-1}=0$.
This is the singularity of type $\mathbf{D_{2(n-1)}^0C_{n-1}}$.

In the other chart $x''=x$, $y''=y/x$, $z''=z/x$,
($w$ remains unchanged),
for the equations labeled with $\mathbf{C_n}$ 
we find a trivial product of the rational double point
of type $A_1$ and a nonsingular curve, 
but for other equations the resulting surfaces are 
nonsingular.
This makes the induction work, and we obtain assertions iii) and iv).

For the assertion v), we consider a generic hyperplane section and substitute 
$w$ by $\xi x+\eta y+\theta z+\iota$
with variables $\xi, \eta, \theta, \iota$.
Each polynomial listed in Tables 2, 3, 4 is viewed as an element of  
$k(\xi, \eta, \theta, \iota)[x,y,z]$.
Then we apply the classification algorithm 
of Lipman \cite[\S 24]{Lipman69} first over the function field
$k(\xi, \eta, \theta, \iota)$, then we examine how the field 
extension of $k(\xi, \eta, \theta, \iota)$
affects the singularities.  
If the invariant $\tau$ of the quadratic form  $Q(X,Y,Z)$ 
(loc. cit.) satisfies $\tau\geq 2$,  
we have a generic hyperplane section with the singularity of type $B_n$.
If $\tau=1$,  
then we consider the cubic form
$\mathrm{\overline{G}(U,V)}\in k(\xi, \eta, \theta, \iota)[U, V]$ (loc. cit.).
If 
$\mathrm{\overline{G}(U,V)}$ is the product of a linear
and an irreducible quadratic factor over $k(\xi, \eta, \theta, \iota)$, 
we have the singularity of type $C_3$ .
If $\mathrm{\overline{G}(U,V)=UV^2}$,
we have type $C_n$ with $n\geq 4$,
and if $\mathrm{\overline{G}(U,V)}=a\mathrm{V^3}$ $(a\ne 0)$,
we have type $F_4$.
For singularities over an algebraically closed field
$\overline{k(\xi, \eta, \theta, \iota)}$,
we need examine if the quadratic form $Q(X,Y,Z)$ as well as the cubic form
 $\mathrm{\overline{G}(U,V)}$ are factorized further.
\end{proof}

\begin{remarks}
i) Note that when we consider a three dimensional singularity,
taking general hyperplane sections
and blowing up singular loci are not compatible in general.

ii) We substitute $w$ with $\xi x+\eta y+\theta z+\iota$
and consider each equation 
$f(x,y,z,\xi z+\eta y+\theta z +\iota)=0$
given by an element of
$k(\xi,\eta, \theta, \iota)[x,y,z]$.
Then the Jacobian ideal is given by  
$$
(\partial F/\partial x,\partial F/\partial y, \partial F/\partial z, F)\subset k(\xi,\eta, \theta, \iota)[x,y,z]
$$
with $F:=f(x,y,z,\xi z+\eta y+\theta z +\iota)$ and the derivations are
considered as elements of $\mathrm{Der}_{k(\xi,\eta, \theta, \iota)}(k(\xi,\eta, \theta, \iota)[x,y,z])$.
We calculate its Gr\"obner bases.
The result is shown below. (We used the lexicographical
monomial order with $y>z>x$ for type $\mathbf{D_N^rB_-}$,
with $x>z>y$ for type $\mathbf{D_N^rC_-}$ 
and with $x>y>z$ for type $\mathbf{E_N^rF_4}$.
For type $\mathbf{D_N^r}$, we present here up to  
$\mathbf{D_{11}^0B_4}$ and $\mathbf{D_{11}^0C_9}$.) 

{\tiny

\vspace{2mm}

\noindent
$\mathbf{D_4^1B_1}:$ $y^2+zx+\eta x^2, yz+\xi x^2, yx+\theta x^2, z^2+\iota x^2, zx^2, x^3$.

\noindent
$\mathbf{D_4^1B_2}:$ $y^2+yz+\xi x^2, yx+\theta x^2, z^2+\iota x^2, zx+\eta x^2, x^3$.

\noindent
$\mathbf{D_4^0B_1}:$ $y^2,  z^2, x^2$.

\noindent
$\mathbf{D_4^0B_2}:$ $y^2,  z^2, x^2$.

\noindent
$\mathbf{D_6^2B_2}:$ $y^3+yz+\xi x^2, y^2z+zx+\eta x^2, yx+\theta x^2, z^2+\iota x^2, zx^2, x^3$.

\noindent
$\mathbf{D_6^2B_3}:$ $y^3+yz+\xi x^2, yx+\theta x^2, z^2+\iota x^2, zx+\eta x^2, x^3$.

\noindent
$\mathbf{D_6^1B_2}:$ $y^3+y^2z, y^2x, z^2,  x^2$.

\noindent
$\mathbf{D_6^1B_3}:$ $y^3+y^2z, y^2x, z^2,  x^2$.

\noindent
$\mathbf{D_7^2B_2}:$ $y^4+zx+\eta x^2, yz+\xi x^2, yx+\theta x^2, z^2+\iota x^2, zx^2, x^3$.

\noindent
$\mathbf{D_8^3B_3}:$ $y^4+yz+\xi x^2, y^3z+zx+\eta x^2, yx+\theta x^2, z^2+\iota x^2, zx^2, x^3$.

\noindent
$\mathbf{D_8^3B_4}:$ $y^4+yz+\xi x^2, yx+\theta x^2, z^2+\iota x^2, zx+\eta x^2,  x^3$.

\noindent
$\mathbf{D_8^2B_2}:$ $y^4+\eta x^2, y^2z+(\xi+\eta)x^2, y^2x+\theta x^2, z^2+\iota x^2, zx^2, x^3$.

\noindent
$\mathbf{D_8^2B_3}:$ $y^4+y^2z, y^2x, z^2, x^2$.

\noindent
$\mathbf{D_8^2B_4}:$ $y^4+y^2z, y^2x, z^2, x^2$.

\noindent
$\mathbf{D_8^1B_2}:$ $y^4+y^2zx+\eta x^2, y^3z+y^2zx+(\eta+\xi)x^2, y^3x+\theta x^2, yx^2, z^2+\iota x^2, zx^2, x^3$.

\noindent
$\mathbf{D_8^1B_3}:$ $y^4+y^3z+\xi x^2, y^3x+\theta x^2, y^2zx+\eta x^2, yx^2, z^2+\iota x^2, zx^2, x^3$.

\noindent
$\mathbf{D_8^1B_4}:$ $y^4+y^3z+\xi x^2, y^3x+\theta x^2, y^2zx+\eta x^2, yx^2, z^2+\iota x^2, zx^2, x^3$.

\noindent
$\mathbf{D_8^0B_2}:$ $y^4, z^2, x^2$.

\noindent
$\mathbf{D_8^0B_3}:$ $y^4, z^2, x^2$.

\noindent
$\mathbf{D_8^0B_4}:$ $y^4, z^2, x^2$.

\noindent
$\mathbf{D_9^3B_3}:$ $y^6+zx+\eta x^2, yz+\xi x^2, yx+\theta x^2, z^2+\iota x^2, zx^2, x^3$.

\noindent
$\mathbf{D_{10}^4B_4}:$ $y^5+yz+\xi x^2, y^4z+zx+\eta x^2, yx+\theta x^2, z^2+\iota x^2, zx^2, x^3$.

\noindent
$\mathbf{D_{10}^4B_5}:$ $y^5+yz+\xi x^2, yx+\theta x^2, z^2+\iota x^2, zx+\eta x^2, x^3$.

\noindent
$\mathbf{D_{10}^3B_3}:$ $y^5+y^2z+\xi x^2, y^3z+\xi yx^2+\eta x^2, y^2x+\theta x^2, z^2+\iota x^2, zx^2, x^3$.

\noindent
$\mathbf{D_{10}^3B_4}:$ $y^5+y^2z, y^2x, z^2, x^2$.

\noindent
$\mathbf{D_{10}^3B_5}:$ $y^5+y^2z, y^2x, z^2, x^2$.

\noindent
$\mathbf{D_{10}^2B_3}:$ $y^5+y^3z+\xi x^2, y^4z+y^2zx+\eta x^2, y^3x+\theta x^2, yx^2, z^2+\iota x^2, zx^2, x^3$.

\noindent
$\mathbf{D_{10}^2B_4}:$ $y^5+y^3z+\xi x^2, y^3x+\theta x^2, y^2zx+\eta x^2, yx^2, z^2+\iota x^2, zx^2, x^3$.

\noindent
$\mathbf{D_{10}^2B_5}:$ $y^5+y^3z+\xi x^2, y^3x+\theta x^2, y^2zx+\eta x^2, yx^2, z^2+\iota x^2, zx^2, x^3$.

\noindent
$\mathbf{D_{10}^1B_3}:$ $y^5+y^4z, y^4x, z^2, x^2$.

\noindent
$\mathbf{D_{10}^1B_4}:$ $y^5+y^4z, y^4x, z^2, x^2$.

\noindent
$\mathbf{D_{10}^1B_5}:$ $y^5+y^4z, y^4x, z^2, x^2$.

\noindent
$\mathbf{D_{11}^4B_4}:$ $y^8+zx+\eta x^2, yz+\xi x^2, yx+\theta x^2, z^2+\iota x^2, zx^2, x^3$.

$\quad\vdots$
\vspace{2mm}

\noindent
$\mathbf{D_5^1C_3}:$ $x^2+xz+\iota y^2, xy+y^2, z^2, zy, y^3$.

\noindent
$\mathbf{D_5^0C_3}:$ $x^2, z^2, y^2$.

\noindent
$\mathbf{D_6^2C_4}:$ $x^2+xz+\eta y^4, xy+\theta y^4, z^2+\iota y^4, zy+\xi y^4+y^3, y^5$.

\noindent
$\mathbf{D_6^1C_3}:$ $x^2+\iota y^2, xy^2+\theta y^3, z^2, zy^2+(1+\xi)y^3, y^4$.

\noindent
$\mathbf{D_6^1C_4}:$ $x^2, xy^2, z^2, zy^2+y^3, y^4$.

\noindent
$\mathbf{D_6^0C_3}:$ $x^2+(\xi +1) xy^2+\theta zy^2+\iota y^2, z^2, y^3$.

\noindent
$\mathbf{D_6^0C_4}:$ $x^2+xy^2, z^2, y^3$.

\noindent
$\mathbf{D_7^2C_5}:$ $x^2+xz+\iota y^4, xy+y^3, z^2, zy, y^5$. 

\noindent
$\mathbf{D_7^1C_4}:$ $x^2, xy^2+y^3, z^2, zy^2, y^4$.

\noindent
$\mathbf{D_7^1C_5}:$ $x^2, xy^2+y^3, z^2, zy^2, y^4$.

\noindent
$\mathbf{D_7^0C_4}:$ $x^2+zy^2, z^2, y^3$.

\noindent
$\mathbf{D_7^0C_5}:$ $x^2+zy^2, z^2, y^3$.

\noindent
$\mathbf{D_8^3C_6}:$ $x^2+xz+\eta y^6, xy+\theta y^6, z^2+\iota y^6, zy+\xi y^6+y^4, y^7$.

\noindent
$\mathbf{D_8^2C_4}:$ $x^2+\eta y^4, xy^2+\theta y^4, z^2+\iota y^4, zy^2+(1+\xi)y^4, y^6$.

\noindent
$\mathbf{D_8^2C_5}:$ $x^2+\iota y^4, xy^2+\theta y^5, z^2, zy^2+\xi y^5+y^4, y^6$.

\noindent
$\mathbf{D_8^2C_6}:$ $x^2, xy^2, z^2, zy^2+y^4, y^6$.

\noindent
$\mathbf{D_8^1C_4}:$ $x^2+xzy^2+\eta y^4, xy^3+\theta y^4, z^2+\iota y^4, zy^3+(1+\xi )y^4, y^5$.

\noindent
$\mathbf{D_8^1C_5}:$ $x^2+xzy^2+\iota y^4, xy^3, z^2, zy^3+y^4, y^5$.

\noindent
$\mathbf{D_8^1C_6}:$ $x^2+xzy^2, xy^3, z^2, zy^3+y^4, y^5$.

\noindent
$\mathbf{D_8^0C_4}:$ $x^2, z^2, y^4$.

\noindent
$\mathbf{D_8^0C_5}:$ $x^2, z^2, y^4$.

\noindent
$\mathbf{D_8^0C_6}:$ $x^2, z^2, y^4$.

\noindent
$\mathbf{D_9^3C_7}:$ $x^2+xz+\iota y^6, xy+y^4, z^2, zy, y^7$.

\noindent
$\mathbf{D_9^2C_5}:$ $x^2+\iota y^4, xy^2+\theta y^5+y^4, z^2, zy^2+\xi y^5, y^6$.

\noindent
$\mathbf{D_9^2C_6}:$ $x^2, xy^2+y^4, z^2, zy^2, y^6$.

\noindent
$\mathbf{D_9^2C_7}:$ $x^2, xy^2+y^4, z^2, zy^2, y^6$.

\noindent
$\mathbf{D_9^1C_5}:$ $x^2+xzy^2+\iota y^4, xy^3+y^4, z^2, zy^3, y^5$.

\noindent
$\mathbf{D_9^1C_6}:$ $x^2+xzy^2, xy^3+y^4, z^2, zy^3, y^5$.

\noindent
$\mathbf{D_9^1C_7}:$ $x^2+xzy^2, xy^3+y^4, z^2, zy^3, y^5$.

\noindent
$\mathbf{D_9^0C_5}:$ $x^2, z^2, y^4$.

\noindent
$\mathbf{D_9^0C_6}:$ $x^2, z^2, y^4$.

\noindent
$\mathbf{D_9^0C_7}:$ $x^2, z^2, y^4$.

\noindent
$\mathbf{D_{10}^4C_8}:$ $x^2+xz+\eta y^8, xy+\theta y^8, z^2+\iota y^8, zy+\xi y^8+y^5, y^9$.

\noindent
$\mathbf{D_{10}^3C_6}:$ $x^2+\eta y^6, xy^2+\theta y^6, z^2+\iota y^6, zy^2+\xi y^6+y^5, y^8$.

\noindent
$\mathbf{D_{10}^3C_7}:$ $x^2+\iota y^6, xy^2+\theta y^7, z^2, zy^2+\xi y^7+y^5, y^8$.

\noindent
$\mathbf{D_{10}^3C_8}:$ $x^2, xy^2, z^2, zy^2+y^5, y^8$.

\noindent
$\mathbf{D_{10}^2C_5}:$ $x^2+xzy^2+\iota y^4, xy^3+\theta y^5, z^2+\eta y^6, zy^3+(1+\xi)y^5, y^7$.

\noindent
$\mathbf{D_{10}^2C_6}:$ $x^2+xzy^2+\eta y^6, xy^3+\theta y^6, z^2+\iota y^6, zy^3+\xi y^6+y^5, y^7$.

\noindent
$\mathbf{D_{10}^2C_7}:$ $x^2+xzy^2+\iota y^6, xy^3, z^2, zy^3+y^5, y^7$.

\noindent
$\mathbf{D_{10}^2C_8}:$ $x^2+xzy^2, xy^3, z^2, zy^3+y^5, y^7$.

\noindent
$\mathbf{D_{10}^1C_5}:$ $x^2+\iota y^4, xy^4+\theta y^5, z^2, zy^4+(1+\xi) y^5, y^6$.

\noindent
$\mathbf{D_{10}^1C_6}:$ $x^2, xy^4, z^2, zy^4+y^5, y^6$.

\noindent
$\mathbf{D_{10}^1C_7}:$ $x^2, xy^4, z^2, zy^4+y^5, y^6$.

\noindent
$\mathbf{D_{10}^1C_8}:$ $x^2, xy^4, z^2, zy^4+y^5, y^6$.

\noindent
$\mathbf{D_{10}^0C_5}:$ $x^2+(\xi+1) xy^4+\theta zy^4+\iota y^4, z^2, y^5$.

\noindent
$\mathbf{D_{10}^0C_6}:$ $x^2+ xy^4, z^2, y^5$.

\noindent
$\mathbf{D_{10}^0C_7}:$ $x^2+ xy^4, z^2, y^5$.

\noindent
$\mathbf{D_{10}^0C_8}:$ $x^2+ xy^4, z^2, y^5$.

\noindent
$\mathbf{D_{11}^4C_9}:$ $x^2+xz+\iota y^8, xy+y^5, z^2, zy, y^9$.

\noindent
$\mathbf{D_{11}^3C_7}:$ $x^2+\iota y^6, xy^2+\theta y^7+y^5, z^2, zy^2+\xi y^7, y^8$.

\noindent
$\mathbf{D_{11}^3C_8}:$ $x^2, xy^2+y^5, z^2, zy^2, y^8$.

\noindent
$\mathbf{D_{11}^3C_9}:$ $x^2, xy^2+y^5, z^2, zy^2, y^8$.

\noindent
$\mathbf{D_{11}^2C_6}:$ $x^2+xzy^2+\eta y^6, xy^3+\theta y^6+y^5, z^2+\iota y^6, zy^3+\xi y^6, y^7$.

\noindent
$\mathbf{D_{11}^2C_7}:$ $x^2+xzy^2+\iota y^6, xy^3+y^5, z^2, zy^3, y^7$.

\noindent
$\mathbf{D_{11}^2C_8}:$ $x^2+xzy^2, xy^3+y^5, z^2, zy^3, y^7$.

\noindent
$\mathbf{D_{11}^2C_9}:$ $x^2+xzy^2, xy^3+y^5, z^2, zy^3, y^7$.

\noindent
$\mathbf{D_{11}^1C_6}:$ $x^2, xy^4+y^5, z^2, zy^4, y^6$.

\noindent
$\mathbf{D_{11}^1C_7}:$ $x^2, xy^4+y^5, z^2, zy^4, y^6$.

\noindent
$\mathbf{D_{11}^1C_8}:$ $x^2, xy^4+y^5, z^2, zy^4, y^6$.

\noindent
$\mathbf{D_{11}^1C_9}:$ $x^2, xy^4+y^5, z^2, zy^4, y^6$.

\noindent
$\mathbf{D_{11}^0C_6}:$ $x^2+zy^4, z^2, y^5$.

\noindent
$\mathbf{D_{11}^0C_7}:$ $x^2+zy^4, z^2, y^5$.

\noindent
$\mathbf{D_{11}^0C_8}:$ $x^2+zy^4, z^2, y^5$.

\noindent
$\mathbf{D_{11}^0C_9}:$ $x^2+zy^4, z^2, y^5$.

$\quad\vdots$

\noindent
$\mathbf{E_7^3F_4}:$ $x^2+y^3+yz+\xi z^2/\iota, xy+\theta z^2/\iota, 
xz+\eta z^2/\iota, yz+z^2/\iota,y^4+z^2/\iota, yz^2, z^3$.

\noindent
$\mathbf{E_7^2F_4}:$ $x^2, xy^2+y^2z, y^3, z^2$.

\noindent
$\mathbf{E_7^1F_4}:$ $x^2+y^3, xy^2, y^4, z^2$.

\noindent
$\mathbf{E_8^3F_4}:$ $x^2, xy^2, y^4, z^2$.

\noindent
$\mathbf{E_8^2F_4}:$ $x^2+y^2z, xy^2, y^4, z^2$.

\noindent
$\mathbf{E_8^1F_4}:$ $x^2+y^3z+\xi z^2/\iota, xy^3+\theta z^2/\iota, 
xy^2z+\eta z^2/\iota, xz^2, y^4+z^2/\iota, yz^2, z^3$.

\noindent
$\mathbf{E_8^0F_4}:$ $x^2, y^4, z^2$.

}

\medskip

\noindent
The Tjurina numbers can be read off from these Gr\"obner bases.
It is also checked that the associated graded ring
of the Tjurina algebra over $k(\xi,\eta, \theta, \iota)$
with respect to the nilradical $(x,y,z)$ is essentially 
obtained by coefficient extension of that of the closed
fiber, i.e.  one has the isomorphism   
\begin{eqnarray*}
&&\mathrm{gr\, }_{(x,y,z)} k(\xi,\eta, \theta, \iota)[x,y,z]/(\partial F/\partial x,\partial F/\partial y, \partial F/\partial z, F) \\
&&\quad\cong 
k(\xi,\eta, \theta, \iota)\otimes_k \mathrm{gr\, }_{(x,y,z)} k[x,y,z]/(\partial g/\partial x,\partial g/\partial y, \partial g/\partial z, g),
\end{eqnarray*}
where $g(x,y,z)=0$ is the defining equation of 
the rational double point on the closed fiber.
\end{remarks}

\begin{table}[h]
\caption{Non-classical compound Du Val singularities in $p=2$, I. (conti. $n\geq 4$) }
\label{tab:non-classical01conti}
\begin{center}
{\scriptsize 
\begin{tabular}{l|l|l|l}
\hline
   & \textrm{Type} & \textrm{Defining equation}& \textrm{Condition} \\
\hline
% $D_{2n}^{n-1}$ & $\mathbf{D_{2n}^{n-1}B_{n-1}}$  & $z^2+xy^n+y^{2n-1}+xyz+wx^2=0$  &  \\
%                & $\mathbf{D_{2n}^{n-1}B_{n}}$  & $z^2+xy^n+xyz+wx^2=0$  &  \\
%                & $\mathbf{D_{2n}^{n-1}C_{2n-2}}$& $z^2+x^2y+xy^n+xyz+wy^{2n-2}=0$  &  \\
%\cline{2-3}
 $D_{2n}^{r}$ & $\mathbf{D_{2n}^{r}B_{r}}$  & $z^2+xy^n+xy^{n-r}z+wx^2+y^{2r+1}=0$  & $_{n/2 < r \leq n-1}$ \\
  & $\mathbf{D_{2n}^{r}B_{r+1}}$  & $z^2+xy^n+xy^{n-r}z+wx^2+y^{2r+3}=0$  &     \\
            & $\quad\vdots$  & $\quad\vdots$  &  \\
 & $\mathbf{D_{2n}^{r}B_{n-1}}$  & $z^2+xy^n+xy^{n-r}z+wx^2+y^{2n-1}=0$  &  \\
  &  $\mathbf{D_{2n}^{r}B_{n}}$  &  $z^2+xy^n+xy^{n-r}z+wx^2=0$ &  \\
 &  $\mathbf{D_{2n}^{r}C_{2r}}$  & $z^2+x^2y+xy^n+xy^{n-r}z+wy^{2r}=0$  &  \\
 &  $\mathbf{D_{2n}^{r}C_{2r+1}}$  & $z^2+x^2y+xy^n+xy^{n-r}z+wy^{2r+1}=0$  &  \\
            & $\quad\vdots$  & $\quad\vdots$  &  \\
 &  $\mathbf{D_{2n}^{r}C_{2n-2}}$  & $z^2+x^2y+xy^n+xy^{n-r}z+wy^{2n-2}=0$  &  \\
\cline{2-3}
 $\quad\vdots$ & $\quad\vdots$  & $\quad\vdots$  &  \\
\cline{2-3}
 $D_{2n}^{r}$ & $\mathbf{D_{2n}^{r}B_{\lfloor (n+1)/2\rfloor}}$  & $z^2+xy^n+xy^{n-r}z+wx^2+y^{2\lfloor(n+1)/2\rfloor+1}=0$  &  $_{1\leq r\leq n/2}$ \\
 & $\mathbf{D_{2n}^{r}B_{\lfloor (n+1)/2\rfloor+1}}$  & $z^2+xy^n+xy^{n-r}z+wx^2+y^{2\lfloor (n+1)/2\rfloor+3}=0$  &  \\
            & $\quad\vdots$  & $\quad\vdots$  &  $_{\lfloor\phantom{11}\rfloor\textrm{ stands for}}$ \\
  & $\mathbf{D_{2n}^{r}B_{n-1}}$  & $z^2+xy^n+xy^{n-r}z+wx^2+y^{2n-1}=0$  &  $^{\textrm{rounding down.}}$  \\
  &  $\mathbf{D_{2n}^{r}B_{n}}$  &  $z^2+xy^n+xy^{n-r}z+wx^2=0$ &  \\
 &  $\mathbf{D_{2n}^{r}C_{n}}$  &  $z^2+x^2y+xy^n+xy^{n-r}z+wy^{n}=0$  &  \\
 &  $\mathbf{D_{2n}^{r}C_{n+1}}$  & $z^2+x^2y+xy^n+xy^{n-r}z+wy^{n+1}=0$  &  \\
            & $\quad\vdots$  & $\quad\vdots$  &  \\
 &  $\mathbf{D_{2n}^{r}C_{2n-3}}$  & $z^2+x^2y+xy^n+xy^{n-r}z+wy^{2n-3}=0$  &  \\
 &  $\mathbf{D_{2n}^{r}C_{2n-2}}$  & $z^2+x^2y+xy^n+xy^{n-r}z+wy^{2n-2}=0$  &  \\
\cline{2-3}
 $\quad\vdots$ & $\quad\vdots$  & $\quad\vdots$  &  \\
\cline{2-3}
 $D_{2n}^0$ & $\mathbf{D_{2n}^0B_{n/2}}$  &  $z^2+xy^n+wx^2+y^{n+1}=0$ & $_{\textrm{$n$ is even.}}$  \\
 & $\mathbf{D_{2n}^0B_{(n+2)/2}}$  &  $z^2+xy^n+wx^2+y^{n+3}=0$ & $_{ \textrm{$n$ is even.}}$  \\
             & $\quad\vdots$  & $\quad\vdots$  & $\quad\vdots$ \\
           & $\mathbf{D_{2n}^0B_{n-1}}$  &  $z^2+xy^n+wx^2+y^{2n-1}=0$ & $_{ \textrm{$n$ is even.}}$ \\
           & $\mathbf{D_{2n}^0B_{n}}$  &  $z^2+xy^n+wx^2=0$ & $_{ \textrm{$n$ is even.}}$ \\
           & $\mathbf{D_{2n}^0C_{n}}$  &  $z^2+x^2y+xy^n+wy^{n}=0$ &  \\
           & $\mathbf{D_{2n}^0C_{n+1}}$  &  $z^2+x^2y+xy^n+wy^{n+1}=0$ &  \\
            & $\quad\vdots$  & $\quad\vdots$  &  \\
           & $\mathbf{D_{2n}^0C_{2n-3}}$  &  $z^2+x^2y+xy^n+wy^{2n-3}=0$ &  \\
           & $\mathbf{D_{2n}^0C_{2n-2}}$  &  $z^2+x^2y+xy^n+wy^{2n-2}=0$ &  \\
\hline
 $D_{2n+1}^{n-1}$ & $\mathbf{D_{2n+1}^{n-1}B_{n-1}}$  & {\tiny $z^2+xyz+wx^2+y^{2n-1}=0$} &  \\
  & $\mathbf{D_{2n+1}^{n-1}C_{2n-1}}$  & {\tiny $z^2+x^2y+y^nz+xyz+wy^{2n-1}=0$} &  \\
\cline{2-3}
 $\quad\vdots$ & $\quad\vdots$  & $\quad\vdots$  &  \\
\cline{2-3}
 $D_{2n+1}^r$ & $\mathbf{D_{2n+1}^rC_{2r+1}}$ & $z^2+x^2y+xy^{n-r}z+y^nz+wy^{2r+1}=0$ &  $_{n/2 <  r < n-1}$\\
   & $\mathbf{D_{2n+1}^rC_{2r+2}}$ & $z^2+x^2y+xy^{n-r}z+y^nz+wy^{2r+2}=0$ &    \\
             & $\quad\vdots$  & $\quad\vdots$  &  \\
            & $\mathbf{D_{2n+1}^rC_{2n-2}}$ & $z^2+x^2y+xy^{n-r}z+y^nz+wy^{2n-2}=0$ &  \\
            & $\mathbf{D_{2n+1}^rC_{2n-1}}$ & $z^2+x^2y+xy^{n-r}z+y^nz+wy^{2n-1}=0$ &  \\
\cline{2-3}
 $\quad\vdots$ & $\quad\vdots$  & $\quad\vdots$  &  \\
\cline{2-3}
 $D_{2n+1}^r$ & $\mathbf{D_{2n+1}^rC_{n+1}}$ & $z^2+x^2y+xy^{n-r}z+y^nz+wy^{n+1}=0$ & $_{1\leq r \leq n/2}$  \\
           & $\mathbf{D_{2n+1}^rC_{n+2}}$ & $z^2+x^2y+xy^{n-r}z+y^nz+wy^{n+2}=0$ &  \\
             & $\quad\vdots$  & $\quad\vdots$  &  \\
            & $\mathbf{D_{2n+1}^rC_{2n-2}}$ & $z^2+x^2y+xy^{n-r}z+y^nz+wy^{2n-2}=0$ &  \\
            & $\mathbf{D_{2n+1}^rC_{2n-1}}$ & $z^2+x^2y+xy^{n-r}z+y^nz+wy^{2n-1}=0$ &  \\
\cline{2-3}
 $\quad\vdots$ & $\quad\vdots$  & $\quad\vdots$  &  \\
\cline{2-3}
 $D_{2n+1}^0$ & $\mathbf{D_{2n+1}^0C_{n+1}}$ & $z^2+x^2y+y^nz+wy^{n+1}=0$ &  \\
            & $\mathbf{D_{2n+1}^0C_{n+2}}$ & $z^2+x^2y+y^nz+wy^{n+2}=0$ &  \\
             & $\quad\vdots$  & $\quad\vdots$  &  \\
            & $\mathbf{D_{2n+1}^0C_{2n-2}}$ & $z^2+x^2y+y^nz+wy^{2n-2}=0$ &  \\
            & $\mathbf{D_{2n+1}^0C_{2n-1}}$ & $z^2+x^2y+y^nz+wy^{2n-1}=0$ &  \\
\hline
\end{tabular}
}
\end{center}
\end{table}

\section{On rationality}

Whether Grauert-Riemenschneider vanishing theorem holds for our examples
needs to be clarified.

\begin{corollary}\label{vanishing}
Let $X$ be a hypersurface singularity given by one of the equations 
in Theorem~\ref{main}.
Then the following assertions hold.
\begin{itemize}
\item [i)] $R^i\pi_*\mathcal O_{\tilde X}=0$ $(i>0)$ holds for any resolution of singularities
$\pi:\tilde X\to X$. 
\item [ii)] $R^i\pi_*K_{\tilde X}=0$ $(i>0)$ holds for any resolution of singularities
$\pi:\tilde X\to X$. 
\end{itemize}
\end{corollary}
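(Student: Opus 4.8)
The plan is to deduce both vanishings from the single statement that $X$ has rational singularities, i.e.\ $R^i\pi_*\mathcal O_{\tilde X}=0$ for $i>0$, using Theorem~\ref{main}, in particular its parts (ii), (iii), (v). First I would reduce (ii) to (i). Take for $\pi\colon\tilde X\to X$ the crepant resolution of Theorem~\ref{main}(iii). Since $X$ is a hypersurface it is Gorenstein, so $K_X$ is invertible, and crepancy means $K_{\tilde X}\cong\pi^*K_X$; hence by the projection formula
\[
R^i\pi_*K_{\tilde X}\;=\;R^i\pi_*(\pi^*K_X)\;\cong\;K_X\otimes R^i\pi_*\mathcal O_{\tilde X}\qquad(i\ge 0).
\]
So (ii) follows from (i) for this particular $\pi$, and then for every resolution by the Proposition of \cite{HIS13} recalled above (independence of $R^i\pi_*\mathcal O_{\tilde X}$ and $R^i\pi_*K_{\tilde X}$ from the choice of resolution). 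It therefore suffices to prove $R^i\pi_*\mathcal O_{\tilde X}=0$ $(i>0)$ for the specific crepant resolution.

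\emph{A general hyperplane section.} Note first that $X$ is normal (a hypersurface, singular only along the curve $C$, hence $R_1+S_2$). Let $\ell$ be a general affine-linear form and $H=\{\ell=0\}\cap X$; then $H$ is a principal divisor, so $\mathcal O_{\tilde X}(-\pi^{-1}H)\cong\mathcal O_{\tilde X}$ and $\ell$ is a nonzerodivisor on $\mathcal O_{\tilde X}$. For general $\ell$ the surface $H$ is smooth away from the finitely many points of $H\cap C$, where by Theorem~\ref{main}(v) it has rational double points in Artin's sense; thus $H$ has rational singularities. Moreover, since the crepant resolution $\pi$ is a succession of blow-ups of the (one-dimensional) singular loci and a general $H$ is transverse to each of these centres, I would check that $\pi|_{\tilde H}\colon\tilde H:=\pi^{-1}(H)\to H$ is, chart by chart, the corresponding succession of blow-ups of the singular points of $H$ (using that blow-up commutes with transverse restriction); hence $\tilde H$ is smooth and $\pi|_{\tilde H}$ is a resolution of the rational surface singularity $H$. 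Consequently $R^i\pi_*\mathcal O_{\tilde H}=R^i(\pi|_{\tilde H})_*\mathcal O_{\tilde H}=0$ for all $i>0$.

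\emph{Conclusion.} Apply $R\pi_*$ to $0\to\mathcal O_{\tilde X}\xrightarrow{\ \ell\ }\mathcal O_{\tilde X}\to\mathcal O_{\tilde H}\to 0$. Because $R^i\pi_*\mathcal O_{\tilde H}=0$ for $i>0$, multiplication by $\ell$ is surjective on $R^i\pi_*\mathcal O_{\tilde X}$ for every $i\ge 1$; a surjective endomorphism of a coherent sheaf is an isomorphism, so $R^i\pi_*\mathcal O_{\tilde X}/\ell\cdot R^i\pi_*\mathcal O_{\tilde X}=0$, i.e.\ $\operatorname{Supp}R^i\pi_*\mathcal O_{\tilde X}\cap\{\ell=0\}=\emptyset$. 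But $R^i\pi_*\mathcal O_{\tilde X}$ is supported on $C$, and, as $\hat{\mathcal O}_{X,x}$ is the same at every $x\in C$ by Theorem~\ref{main}(ii), the theorem on formal functions forces its completed stalks, hence its support, to be empty or all of $C$. Since a general hyperplane $\{\ell=0\}$ meets the line $C$, the support is empty; so $R^i\pi_*\mathcal O_{\tilde X}=0$ for $i>0$, which is (i), and with the first step also (ii).

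\emph{Main obstacle.} The delicate step is the middle one in characteristic $2$: one must know that $H$ is a genuinely \emph{rational} surface singularity — exactly the content of Theorem~\ref{main}(v) together with Artin's classification \cite{Artin66}, where the contrast with Lipman's notion of rationality over the imperfect function field must be kept in view — and one must check that the blow-up sequence of Theorem~\ref{main}(iii) restricts to a resolution of $H$, that is, the general position of $H$ with respect to the successive centres and the compatibility of blow-up with transverse restriction, since the usual characteristic-zero Bertini argument is not available verbatim. If one prefers to avoid hyperplane sections, the same conclusion can instead be obtained directly: by the theorem on formal functions, $R^1\pi_*\mathcal O_{\tilde X}=0$ amounts to the vanishing of $H^1(\mathcal O)$ on every infinitesimal neighbourhood of each one-dimensional exceptional fibre, which can be read off from the chart equations already produced in the proof of Theorem~\ref{main}; this route is correct but considerably more laborious.
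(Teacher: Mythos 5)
Your proposal follows essentially the same route as the paper: reduce ii) to i) via the crepant resolution (triviality of $K_{\tilde X}$) together with independence of the higher direct images from the choice of resolution, then use that a general hyperplane section $H$ has rational double points (Theorem~\ref{main}~v)) and that the successive blow-ups resolving $X$ restrict, by Bertini/transversality, to a resolution of $H$, from which the vanishing for $X$ is extracted. The only difference is cosmetic: where the paper says ``direct calculation of cohomologies gives the vanishing,'' you make the last step explicit via the $\ell$-multiplication exact sequence, Nakayama, and the homogeneity of $\hat{\mathcal O}_{X,x}$ along $C$ — a legitimate filling-in of the same argument rather than a new approach.
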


\begin{proof}
First note that assertion i) and ii) are 
equivalent to each other because
we have a crepant resolution $\pi:\tilde X\to X$, 
from which follows the equality $K_{\tilde X}\cong \mathcal O_{\tilde X}$. 

From the previous theorem we know that a general hyperplane section 
$H\subset X$
has a rational double point.
Let $\pi_1: X_1\to X$ be the blow-up along the singular locus of $X$.
Bertini's theorem tells us that this $\pi_1$ restricted to $H$ 
is also a point blow-up.    
The locus of 
a rational double point of $\pi_1^*H$
corresponds to the singular locus of $X_1$.
Then a general hyperplane section $H_1\subset X_1$ also has 
a rational double point, and direct calculation of cohomologies 
gives the vanishing.
\end{proof}

It might be worth mentioning that our examples are 
irrational from the  viewpoint of the theory of
tight closures. 

\begin{proposition}\label{f-rationality}
Let $X$ be a hypersurface singularity given by one of the equations 
in Theorem~\ref{main}.
Then the following assertions hold.
\begin{itemize}
\item [i)] $X$ is $F$-pure if and only if 
the type is one of 
{\footnotesize
$\mathbf{E_7^3F_4}$, $\mathbf{D_4^1B_1}$, 
$\mathbf{D_4^1B_2}$, $\mathbf{D_5^1C_3}$, $\mathbf{D_{2n}^{n-1}B_{n-1}}$,
$\mathbf{D_{2n}^{n-1}B_{n}}$, $\mathbf{D_{2n}^{n-1}C_{2n-2}}$,
$\mathbf{D_{2n+1}^{n-1}B_{n-1}}$,   
$\mathbf{D_{2n+1}^{n-1}C_{2n-1}}$ 
}
with $n\geq 3$.
\item [ii)] $X$ is not $F$-rational.
\end{itemize}
\end{proposition}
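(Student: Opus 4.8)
The plan is to read off (i) from Fedder's criterion and to defeat $F$-rationality in (ii) by one explicit tight-closure computation that applies uniformly to every equation in the theorem. The key structural observation is that for each defining polynomial $f$ one may write, in $R=k[x,y,z,w]/(f)$,
\[
z^{2}=x^{2}a+y^{2}b+\varepsilon\,xyz,\qquad a,b\in R,\ \varepsilon\in\{0,1\},
\]
since the quadratic part of $f$ is $z^{2}$ while every other monomial of $f$ is divisible by $x^{2}$ or by $y^{2}$ or equals $xyz$; moreover every monomial of $f$ other than $z^{2}$ is divisible by one of $x,y,w$, so $R/(x,y,w)R\cong k[z]/(z^{2})$.

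For (i): the origin $\mathfrak m_{0}=(x,y,z,w)$ lies on the singular line $C=\{x=y=z=0\}$. As $R$ is $F$-finite, $X$ is $F$-pure iff $R_{\mathfrak m}$ is $F$-pure for every maximal ideal $\mathfrak m$; for $\mathfrak m\notin C$ this ring is regular, hence $F$-pure, and for $\mathfrak m\in C$ one has $\widehat{R_{\mathfrak m}}\cong\widehat{R_{\mathfrak m_{0}}}$ by Theorem~\ref{main}(ii), so $R_{\mathfrak m}$ is $F$-pure iff $R_{\mathfrak m_{0}}$ is. Hence $X$ is $F$-pure iff $R_{\mathfrak m_{0}}$ is, which by Fedder's criterion for the hypersurface $(f)$ in characteristic $2$ means $f\notin\mathfrak m_{0}^{[2]}=(x^{2},y^{2},z^{2},w^{2})$. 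By the structure above the only monomial of $f$ that can violate this is $xyz$; inspecting Tables~2, 3 and 4 shows that $xyz$ occurs in $f$ precisely for the types listed in the statement, proving (i).

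For (ii): $X$ is $F$-rational iff $R_{\mathfrak m}$ is $F$-rational for all maximal $\mathfrak m$, so it suffices to show that $R_{\mathfrak m_{0}}$ is not; as $R_{\mathfrak m_{0}}$ is Cohen--Macaulay it is enough to exhibit one parameter ideal that is not tightly closed, and I claim $\mathfrak q=(x,y,w)$ is such an ideal. It is a parameter ideal because $R/\mathfrak q R\cong k[z]/(z^{2})$ is Artinian, and the image of $z$ is nonzero there, so $z\notin\mathfrak q R$. Using additivity of Frobenius in characteristic $2$ and the displayed relation, an induction on $e\ge 1$ gives $z^{2^{e}+1}\in(x^{2^{e}},y^{2^{e}})R$: the case $e=1$ is the identity $z^{3}=x^{2}(az+\varepsilon xya)+y^{2}(bz+\varepsilon xyb)+\varepsilon\,x^{2}y^{2}z$, and for $e\to e+1$ one expands $z^{2^{e+1}+1}=z\,(z^{2})^{2^{e}}=x^{2^{e+1}}a^{2^{e}}z+y^{2^{e+1}}b^{2^{e}}z+\varepsilon\,x^{2^{e}}y^{2^{e}}z^{2^{e}+1}$ and feeds the inductive hypothesis into the last summand. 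Taking $c=z$, a nonzero element of the domain $R$ and hence a legitimate tight-closure multiplier, we obtain $c\,z^{2^{e}}=z^{2^{e}+1}\in(x^{2^{e}},y^{2^{e}})R\subseteq\mathfrak q^{[2^{e}]}$ for every $e\ge 1$; thus $z\in\mathfrak q^{*}\setminus\mathfrak q R$, so $\mathfrak q$ is not tightly closed and $X$ is not $F$-rational. This argument applies verbatim to the $F$-pure cases of~(i) as well, where the statement has genuine content because $F$-rational local rings are $F$-pure but not conversely.

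The routine parts are the table bookkeeping for (i), the isomorphism $R/(x,y,w)R\cong k[z]/(z^{2})$, and the $e=1$ identity in (ii). The only places needing a little care are the globalization step in (i) — that $F$-purity of $X$ is detected at the origin, which rests on openness of the $F$-pure locus for $F$-finite rings together with Theorem~\ref{main}(ii) — and, in (ii), verifying that the correction terms produced at each stage of the induction genuinely lie in $(x^{2^{e}},y^{2^{e}})R$. Neither is a real obstacle; the substance of the argument is the single observation that the fixed multiplier $c=z$ and the fixed parameter ideal $(x,y,w)$ break $F$-rationality simultaneously for all the equations in Theorem~\ref{main}.
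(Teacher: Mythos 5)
Your argument is correct, and it splits naturally into two halves of different character. For part i) you do essentially what the paper does: invoke Fedder's criterion, which in characteristic $2$ reduces to checking $f\notin(x^2,y^2,z^2,w^2)$, observe that the only squarefree monomial occurring anywhere in Tables 2, 3, 4 is $xyz$, and note via Theorem~\ref{main}~ii) and openness/localness of $F$-purity that everything is detected at the origin; this matches the listed types exactly. For part ii) you take a genuinely different route. The paper verifies that the $F$-pure cases fail the $F$-regularity criteria of Glassbrenner and Hara and then uses the equivalence of $F$-regularity and $F$-rationality for Gorenstein rings (the non-$F$-pure cases being handled by $F$-rational Gorenstein $\Rightarrow$ $F$-regular $\Rightarrow$ $F$-pure). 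You instead exhibit, uniformly for every equation, the explicit parameter ideal $\mathfrak q=(x,y,w)$ and the multiplier $c=z$, and your induction showing $z^{2^e+1}\in(x^{2^e},y^{2^e})$ from the structural relation $z^2=x^2a+y^2b+\varepsilon xyz$ is valid, so $z\in\mathfrak q^*\setminus\mathfrak q$; this is more elementary and self-contained, avoiding the graded/criteria machinery entirely, at the cost of being special to these equations rather than flowing from general classification results. Two small touch-ups: you should justify that $z\in R^\circ$ (either note $f$ is irreducible --- it is linear in $w$ with coefficient a power of $x$ or $y$ coprime to the remaining part, which contains $z^2$ --- or simply observe that $z$ cannot divide $f$, so $z$ lies in no minimal prime), and your closing aside that ``$F$-rational local rings are $F$-pure'' is false in general and should be qualified by the Gorenstein hypothesis, though nothing in your proof depends on it.
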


\begin{proof}
i) One uses Fedder's criterion for $F$-purity \cite[Proposition 2.1]{Fedder83}.

\noindent
ii) One needs to check that the singularities which turn out to be $F$-pure in i)
do not satisfy the criterion for $F$-regularity
\cite[Theorem 4.1.1]{Glassbrenner92}, \cite[Proposition 3.1]{Hara95}.
Then use the equivalence of $F$-regularity and $F$-rationality
for Gorenstein rings \cite[\S 10, Proposition 10.3.7]{BrunsHerzog98}.
\end{proof}

\begin{remark}
We presented here a direct proof on $F$-purity and $F$-rationality.
But this result can be derived from the knowledge of 
types of rational double 
points on general hyperplane sections only.
For example, combine \cite[Theorem 3.4]{Fedder83}, 
\cite[Proposition 10.3.11]{BrunsHerzog98}
and \cite[Theorem 6.1]{SchwedeZhang13}.
Then as Hara points out \cite[Remark~1.3]{Hara98}, 
one can determine the $F$-purity and $F$-regularity of 
rational double points using the criteria for 
the defining equations given by Artin \cite{Artin77}.
 
\end{remark}

\specialsection*{Acknowledgement}
The author would like to express his sincere gratitude to 
Professors Kei-ichi~Watanabe, Shihoko~Ishii,
Tadashi~Tomaru, Masataka~Tomari for valuable suggestions and comments.
I also thank Professors Kenji~Matsuki, Noboru~Nakayama, 
Takehiko~Yasuda, Shunsuke~Takagi,
Nobuo~Hara for discussion and comments and
Professors Natsuo~Saito, Hiroyuki~Ito and Toshiyuki~Katsura for their genuine support.

\bibliographystyle{amsplain}

\end{document}